\renewcommand{\vec}[1]{\boldsymbol{\mathrm{#1}}}
\newcommand{\N}{\mbox{\rm \hbox{I\kern-.15em\hbox{N}}}}
\newcommand{\R}{\mbox{\rm \hbox{I\kern-.15em\hbox{R}}}}
\def \N {\mbox{\rm \hbox{I\kern-.15em\hbox{N}}}}
\def \R {\mbox{\rm \hbox{I\kern-.15em\hbox{R}}}}
\newcommand{\norm}[1]{\left\Vert {#1} \right\Vert}
\newcommand{\bS}{\mathbf{S}}
\newcommand{\bc}{\mathbf{c}}
\newcommand{\bp}{\mathbf{p}}
\newcommand{\bx}{\mathbf{x}}
\newcommand{\bw}{\vec{w}}
\newcommand{\by}{\vec{y}}
\newcommand{\bZero}{\mathbf{0}}
\newcommand{\LDLT}{$LDL^T$}
\newcommand{\blambda}{\boldsymbol{\lambda}}
\newcommand{\lambdaNatural}{\blambda^{\bc = 0}}
\newcommand{\Lagr}{\mathcal{L}}
\newtheorem{theorem}{Theorem}
\begin{document}

\title{SGN: Sparse Gauss-Newton for Accelerated Sensitivity Analysis}

\author{Jonas Zehnder}
\affiliation{%
  \institution{Universit\'e de Montr\'eal}
  \department{LIGUM}
  \country{Canada}
}
\email{jonas.zehnder@umontreal.ca}

\author{Stelian Coros}
\affiliation{%
  \institution{ETH Z\"urich}
  \department{CRL}
  \country{Switzerland}
}
\email{scoros@inf.ethz.ch}

\author{Bernhard Thomaszewski}
\affiliation{%
  \institution{ETH Z\"urich}
  \department{CRL}
  \country{Switzerland}
}
\affiliation{%
  \institution{Universit\'e de Montr\'eal}
  \department{LIGUM}
  \country{Canada}
}
\email{bthomasz@inf.ethz.ch}

\renewcommand{\shortauthors}{Zehnder, Coros and Thomaszewski}

\begin{abstract}
We present a sparse Gauss-Newton solver for accelerated sensitivity analysis with applications to a wide range of equilibrium-constrained optimization problems.
Dense Gauss-Newton solvers have shown promising convergence rates for inverse problems, but the cost of assembling and factorizing the associated matrices has so far been a major stumbling block. In this work, we show how the dense Gauss-Newton Hessian can be transformed into an equivalent sparse matrix that can be assembled and factorized much more efficiently. This leads to drastically reduced computation times for many inverse problems, which we demonstrate on a diverse set of examples. We furthermore show links between sensitivity analysis and nonlinear programming approaches based on Lagrange multipliers and prove equivalence under specific assumptions that apply for our problem setting. 
\end{abstract}

\begin{CCSXML}
<ccs2012>
   <concept>
       <concept_id>10010147.10010371.10010396</concept_id>
       <concept_desc>Computing methodologies~Shape modeling</concept_desc>
       <concept_significance>500</concept_significance>
       </concept>
   <concept>
       <concept_id>10010405.10010432.10010439.10010440</concept_id>
       <concept_desc>Applied computing~Computer-aided design</concept_desc>
       <concept_significance>500</concept_significance>
       </concept>
   <concept>
       <concept_id>10010147.10010148.10010149.10010161</concept_id>
       <concept_desc>Computing methodologies~Optimization algorithms</concept_desc>
       <concept_significance>500</concept_significance>
       </concept>
 </ccs2012>
\end{CCSXML}

\ccsdesc[500]{Computing methodologies~Shape modeling}
\ccsdesc[500]{Applied computing~Computer-aided design}
\ccsdesc[500]{Computing methodologies~Optimization algorithms}

\keywords{Sensitivity analysis, Sparse Gauss-Newton, Equilibrium-constrained optimization, Nonlinear least-squares}
 
\maketitle

\section{Introduction}

Many design tasks in engineering involve the solution of \textit{inverse problems}, where the goal is to find design parameters for a mechanical system such that the corresponding equilibrium state is optimal with respect to given objectives.
As an alternative to conventional nonlinear programming, an approach that has recently seen increasing attention in the visual computing community is to eliminate the equilibrium constraints using sensitivity analysis. Removing redundant degrees of freedom decreases not only the problem size, it also transforms a difficult nonlinear constrained optimization problem into an unconstrained minimization problem. 

Solving such minimization problems efficiently requires derivatives of the map between parameters and state, which is given implicitly via the solution of the \textit{forward} simulation problem. While the gradient can be computed efficiently with the adjoint method, using only first-order derivative information often leads to unsatisfying convergence, even if acceleration techniques such as L-BFGS are used. Second-order sensitivity analysis promises faster convergence by requiring fewer iterations but comes at the price of dense and potentially indefinite system matrices. The second problem can be resolved by resorting to the Gauss-Newton method, which replaces the full Hessian with a positive-definite approximation. However, its dense nature greatly impedes the potential of second-order sensitivity analysis.

In this paper, we show how the dense Gauss-Newton Hessian can be transformed into an equivalent sparse matrix that can be assembled and factorized much more efficiently than its dense counterpart. 
Whereas the asymptotic complexity for dense solvers is approximately $O(n^3)$ for an $n\times n$ matrix, the cost of factorizing sparse systems depends on the sparsity pattern, which is itself problem-dependent. While meaningful asymptotic bounds for sparse factorization are hard to obtain \cite{peng2020solving}, our extensive numerical examples indicate drastically reduced computation times for a wide range of inverse design problems. We furthermore establish links between sensitivity analysis and general nonlinear programming approaches based on Lagrange multipliers and prove equivalence under specific assumptions. 

\begin{figure}[t]
	\centering
	\vspace{1em}
	\raisebox{0.57\height}{\includegraphics[width= 0.44\linewidth]{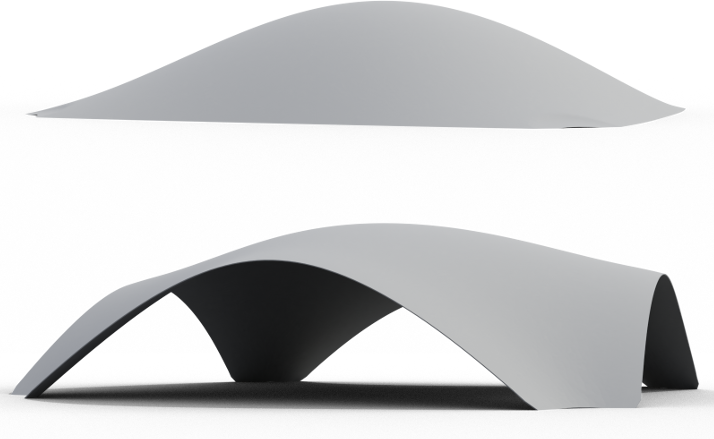}}
	\includegraphics[width= 0.55\linewidth]{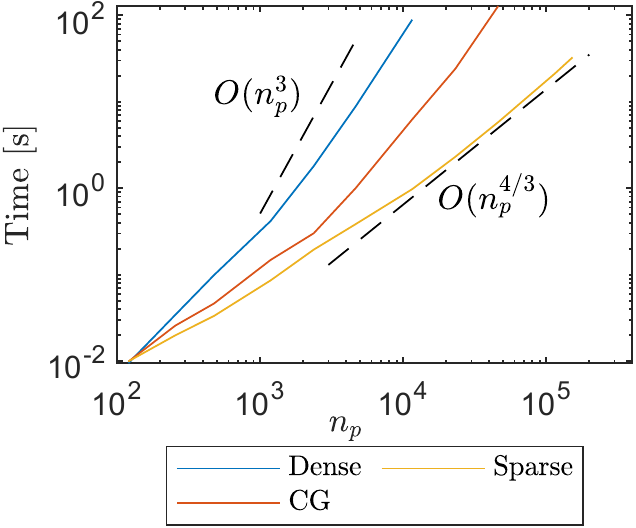}
	\caption{Comparison between dense Gauss-Newton and our sparse formulation on a shape optimization problem for a concrete shell. \textit{Left}: shell roof before (\textit{top}) and after (\textit{bottom}) optimization. \textit{Right}: average timings for computing the search direction with sparse Gauss-Newton, dense Gauss-Newton and the CG method as a function of the number of design parameters $n_p$.}
	\label{fig:teaser}
\end{figure}

\section{Related Work}

Since fabrication-oriented design moved into the focus of the visual computing community, inverse problems have received a surge of attention. While an exhaustive review of applications is beyond the scope of this work, many different methods have been proposed for solving inverse design problems ranging from musical instruments \cite{Bharaj:2015:CDM:2816795.2818108,Musialski:2016:NSO,Umetani16Printone} and balloons \cite{Skouras14DIS,skouras2012computational}, to deployable structures \cite{Guseinov17CurveUps,StringActuated:KilianMonszpartMitra:TOG:2017,Panetta:2019:XNC:3306346.3323040} and architectural-scale surfaces \cite{Vouga12SelfSupporting} and frameworks \cite{Jiang17Design,gauge2014interactive,Pietroni17PositionBased}. While some of these works propose formulations tailored to specific applications, we target general inverse problems that can be cast as constrained minimization problems with continuous parameter and state variables that are subject to equality constraints derived from physical principles.

One natural approach to solving such problems is via sequential quadratic programming (SQP), which uses states and parameters as problem variables while introducing Lagrange multipliers to enforce equilibrium constraints; see, e.g., \cite{Skouras14DIS}. As an alternative that avoids challenges associated with Lagrange multiplier formulations, the augmented Lagrangian method (ALM) has been applied to shape \cite{skouras2012computational} and multi-material \cite{skouras2013computational} optimization problems.
Another approach that has recently seen increasing attention is sensitivity analysis, which eliminates state variables and constraints such as to obtain an unconstrained minimization problem with design parameters as only variables. Sensitivity analysis is a powerful method that has been used for inverse design of mechanisms \cite{coros2013computational,Megaro:2017:CDT:3072959.3073636}, clothing \cite{Wang:2018:RSP:3197517.3201320}, material optimization \cite{Yan:2018:IDM,Zehnder:2017:MDF:3130800.3130881} as well as for optimization-based forward design \cite{umetani2011sensitive,Perez:2017:CDA}.

First-order sensitivity analysis provides the gradient of the objective function with respect to design parameters, which can be computed efficiently using the adjoint method (see, e.g., \cite{bletzinger2010optimal}). While gradient descent typically performs poorly, faster convergence for the corresponding minimization problem can be achieved using, e.g., Anderson acceleration \cite{Peng:2018:AAG:3197517.3201290}, or Quasi-Newton methods such as L-BFGS \cite{Liu:2017:QMR:3072959.2990496}. 
Kovalsky et al. \shortcite{Kovalsky:2016:AQP:2897824.2925920} improve convergence for mesh optimization by combining acceleration with Laplacian preconditioning for the gradient. Using the same preconditioner, Zhu et al. \shortcite{Zhu:2018:BCQ:3197517.3201359} instead propose a modified L-BFGS method to accelerate convergence.
These methods mostly aim at geometry deformation tasks for which the cost of evaluating the objective is relatively low. 
For physics-constrained design problems, however, the costs per step are significantly higher since each evaluation of the objective function requires simulation.

Recent work has started to investigate ways of exploiting second-order derivative information for sensitivity analysis. Panetta et al. \shortcite{Panetta:2019:XNC:3306346.3323040} use Newton's method with a trust region approach on the reduced Hessian resulting from second-order sensitivity analysis. Zimmermann et al. \shortcite{Zimmermann19Puppetmaster} propose a Generalized Gauss-Newton solver with Hessian contributions selected such as to avoid indefinite matrices. However, while this method has led to promising convergence in terms of the number of required iterations, assembling and factorizing the dense Hessian undoes this potential advantage to a large extent. This impression is further substantiated by Wang \shortcite{Wang:2018:RSP:3197517.3201320}, who benchmarked sensitivity analysis with generalized Gauss-Newton against exact and inexact gradient descent methods. 

Our method overcomes these problems through a sparse reformulation of Gauss-Newton that yields the same search direction but drastically decreases the time required for assembling and factorizing the linear system.  Although sparse Gauss-Newton formulations for sensitivity analysis have, to the best of our knowledge, not been investigated before, there are connections to so-called projected SQP methods based on reduced Hessians that have been studied in the optimization community \cite{Heinkenschloss96ProjectedSQP}. We use these insights to show equivalence between sensitivity analysis and nonlinear programming with Lagrange multipliers for specific equilibrium-constrained optimization problems.

\section{Background}

We consider constrained optimization problems of the form
\begin{equation}
\label{eq:optProb}
\min_{\bx, \bp} f(\bx, \bp) \quad \text{s.t.} \quad \bc(\bx, \bp) = \mathbf{0}\ ,
\end{equation}
where $\bx$ denotes the equilibrium state of a mechanical system described by a set of design parameters $\bp$. The state $\bx$ is coupled to the design parameters $\bp$ through a set of constraints $\bc$ requiring that $\bx$ must be an equilibrium configuration for $\bp$. While the exact form of these equilibrium constraints depends on the problem, we will focus on static and dynamic force balance in this work.

\subsection{Sensitivity Analysis}
We exclusively consider the special but common case of problems which exhibit exactly as many equality constraints as state variables, i.e., $n_\bc=n_\bx$.
Furthermore, we assume that the constraint Jacobian $\frac{\partial \bc}{\partial \bx}$ has full rank.
Under these conditions, the implicit function theorem asserts that any choice of $\bp$ in a local neighborhood uniquely determines the corresponding equilibrium state $\bx$ and we therefore write $\bx=\bx(\bp)$.
Given a state-parameter pair $(\bx,\bp)$ that satisfies the equilibrium constraints, we require that any change to the design parameters induces a corresponding change in state such that the system is again at equilibrium. Formally, we have
\begin{equation}
\frac{d\bc}{d\bp}
= \frac{\partial \bc}{\partial \bp} + \frac{\partial \bc}{\partial \bx}\frac{d \bx}{d \bp}
= \bZero \ ,
\label{eq:dcdp}
\end{equation}
from which we directly obtain the so called \textit{sensitivity matrix}
\begin{equation}
\label{eq:dxdp}
\bS=\frac{d \bx}{d \bp} = - \left( \frac{\partial \bc}{\partial \bx} \right)^{-1} \frac{\partial \bc}{\partial \bp} \ .
\end{equation}
Using this relation, the gradient of the objective function with respect to the parameters is obtained as
\begin{equation}
\label{eq:totalGradient}
\frac{d f(\bx(\bp),\bp)}{d\bp} 
= \frac{\partial f}{\partial \bp} + \frac{\partial f}{\partial \bx} \bS  \ .
\end{equation}
We note that, by using (\ref{eq:dcdp}) in the above expression and rearranging terms, computing the gradient requires only the solution of a single linear system.

\subsection{Gauss-Newton}
With the gradient defined through (\ref{eq:totalGradient}), we can minimize $f$ using steepest descent in parameter space. Every step amounts to updating $\bp$ along the search direction, computing an equilibrium configuration $\bx$ through simulation, and evaluating the objective to accept or reject the step. Though simple, the convergence of steepest descent is typically very slow. Using the Hessian of the objective function, Newton's method promises quadratic convergence close to the solution. However, Newton's method is often plagued by indefiniteness on the road towards the optimum, requiring expensive regularization and other advanced strategies. 
As a promising middle ground, Gauss-Newton retains parts of the Hessian information but is guaranteed to never encounter indefiniteness.

Gauss-Newton in its original form is a minimization algorithm for objective functions in nonlinear least-squares form,
\begin{equation}
f(\bx, \bp)=\sum_i \frac{w_i}{2} r_i(\bx, \bp)^2  \ ,
\label{eq:leastSquaresObjective}
\end{equation}
where $\bw=(w_1, \ldots, w_n)$ is a vector of weights and $r_i$ are residuals. 
Instead of using the full Hessian 
\begin{equation}
H = \sum_i w_i\frac{dr_i}{d \bp}^T\frac{dr_i}{d \bp} + \sum_iw_ir_i\frac{d^2 r_i}{d\bp^2} \ ,
\end{equation}
Gauss-Newton drops the second term to define an approximate but positive-definite Hessian. 
Writing out $d r_i / d \bp$ we arrive at 
\begin{equation}
H_{GN} = 
\begin{bmatrix}
\frac{d \bx}{d \bp}^T & I
\end{bmatrix} 
\left( \sum_i w_i
\begin{bmatrix}
\frac{\partial r_i}{\partial \bx}^T \frac{\partial r_i}{\partial \bx} &  \frac{\partial r_i}{\partial \bx}^T \frac{\partial r_i}{\partial \bp} \\
\frac{\partial r_i}{\partial \bp}^T \frac{\partial r_i}{\partial \bx} & \frac{\partial r_i}{\partial \bp}^T \frac{\partial r_i}{\partial \bp} 
\end{bmatrix}
\right)
\begin{bmatrix}
\frac{d \bx
}{d \bp} \\
I
\end{bmatrix}
\ .
\label{eq:HGN}
\end{equation}
A Gauss-Newton step can then be computed by solving the system of linear equations
\begin{equation}
\label{eq:newtonStep}
H_{GN} \cdot \delta \bp = - \frac{d f}{d \bp}^T \ .
\end{equation}
In practice, however, computing, assembling, and factorizing this reduced Hessian matrix is exceedingly expensive: it requires the complete sensitivity matrix, products between sparse matrices with incompatible sparsity patterns, and leads to a dense matrix that is expensive to factorize. 

\section{Sparse Gauss-Newton}
\label{sec:SGN}
To arrive at a more efficient formulation, we start by rewriting the Gauss-Newton Hessian as
\begin{equation}
\label{eq:reducedGN}
H_{GN} = 
\frac{d \bx}{d \bp}^T A \frac{d \bx}{d \bp}  + B\frac{d \bx}{d \bp} + \frac{d \bx}{d \bp}^T B^T
+C
\end{equation}
with
\begin{equation*}
 A=\sum_i w_i \frac{\partial r_i}{\partial \bx}^T \frac{\partial r_i}{\partial \bx} \ , \; B=\sum_i w_i \frac{\partial r_i}{\partial \bp}^T \frac{\partial r_i}{\partial \bx}\ , \; \text{and } \; C= \sum_i w_i\frac{\partial r_i}{\partial \bp}^T \frac{\partial r_i}{\partial \bp}.
\end{equation*}
Since $\frac{d \bx}{d \bp} = - \left[ \frac{\partial \bc}{\partial \bx} \right]^{-1} \frac{\partial \bc}{\partial \bp}$, the inverse of the constraint Jacobian appears inside the definition of $H_{GN}$. We can remove this inverse by reformulating the problem with additional variables $\delta \bx = \frac{d \bx}{d \bp} \delta \bp$, which is equivalent to $\frac{\partial \bc}{\partial \bp} \delta \bp + \frac{\partial \bc}{\partial \bx} \delta \bx = 0 $,
\begin{equation}
\label{eq:firstSystemGN}
\begin{bmatrix}
\frac{d \bx^T}{d \bp} A + B  &
C + \frac{d \bx^T}{d \bp} B^T  \\
\frac{\partial \bc}{\partial \bx} &
\frac{\partial \bc}{\partial \bp} 
\end{bmatrix}
\begin{bmatrix}
\delta \bx \\
\delta \bp
\end{bmatrix}
= 
\begin{bmatrix}
- \frac{d f}{d \bp}^T \\
0
\end{bmatrix} \ .
\end{equation}
However, the transpose of the sensitivity matrix still appears in this system. To also remove this occurrence, we introduce additional variables $\delta \blambda$ defined as
\begin{equation}
\label{eq:GNdlambda}
\frac{\partial \bc}{\partial \bx}^T \delta \blambda + 
B^T \delta \bp + 
A \delta \bx= 0 \ ,
\end{equation}
and arrive at the extended system
\begin{equation}
\label{eq:sparseSystemGN}
\begin{bmatrix}
A&
B^T &
\frac{\partial \bc}{\partial \bx}^T\\
B &
C &
\frac{\partial \bc}{\partial \bp} ^T\\
\frac{\partial \bc}{\partial \bx} &
\frac{\partial \bc}{\partial \bp} &
0
\end{bmatrix}
\begin{bmatrix}
\delta \bx \\
\delta \bp \\
\delta \blambda
\end{bmatrix}
= 
\begin{bmatrix}
0 \\
- \frac{d f}{d \bp}^T\\
0
\end{bmatrix} \ .
\end{equation}
Note that the first row enforces (\ref{eq:GNdlambda}), whereas the second row is obtained by using (\ref{eq:GNdlambda}) and (\ref{eq:dxdp}) in the first row of (\ref{eq:firstSystemGN}).
The resulting system is sparse and it requires neither the inverse of the constraint Jacobian, nor the sensitivity matrix.
We emphasize that, by construction, the solution $\delta\bp$ obtained when solving this system is \textit{exactly identical} to the one obtained when factorizing the dense Hessian.
Although this new system is larger than the reduced one, its sparsity allows us to leverage specialized linear solvers. The exact time complexity of common sparse direct solvers is only known for specific sparsity patterns and can range from $O(n)$ for, e.g., a diagonal matrix to $O(n^3)$ for a quasi-dense matrix \cite{peng2020solving}. Nevertheless, our experiments show that the cost of factorizing the larger sparse system is \textit{asymptotically lower} than the cost of factorizing the reduced dense system for many problems; see Fig. \ref{fig:teaser} for an example. This result translates into dramatically improved performance for a large range of problems, as we demonstrate with our examples.

\subsection{Discussion and Generalization}
\label{sec:GeneralizationGN}

\paragraph{Relation to Sequential Quadratic Programming}
System (\ref{eq:sparseSystemGN}) is in the form of a saddle-point problem that is characteristic for first-order optimality conditions in nonlinear programming. 
Indeed, it can be shown that second-order sensitivity analysis on general objectives is equivalent to so called reduced SQP methods when using a particular definition for the Lagrange multipliers; see \cite{Reyes15Numerical} and our derivations in Appendix \ref{sec:app}.

\paragraph{Generalization to Arbitrary Objectives}
Our construction can be extended to general objectives $f(\bx(\bp),\bp)$ for which the Hessian reads
\begin{equation}
\begin{split}
\frac{d^2 f}{d \bp^2} = &\frac{d \bx}{d \bp}^T \frac{\partial^2 f}{\partial \bx^2} \frac{d \bx}{d \bp} + \frac{\partial^2 f}{\partial \bx \partial \bp} \frac{d \bx}{d \bp} + \frac{d \bx}{d \bp}^T \frac{\partial^2 f}{\partial \bp \partial \bx} 
 + \frac{\partial^2 f}{\partial \bp^2} \\
& + \sum_i \frac{\partial f}{\partial \bx_i} \frac{d^2 \bx_i}{d \bp^2} \ .
\end{split}
\label{eq:totalHessian}
\end{equation}
In particular, when dropping only second-order sensitivities to obtain the Generalized Gauss-Newton approximation \cite{Zimmermann19Puppetmaster}, our formulation applies directly with blocks defined as $A=\frac{\partial^2 f}{\partial \bx^2}$, $B=\frac{\partial^2 f}{\partial \bx \partial \bp}$, and $C=\frac{\partial^2 f}{\partial \bp^2}$. The extension to the full-Hessian case and its relation to nonlinear programming is described in Appendix \ref{sec:app}.
It should be noted, however, that neither Generalized Gauss-Newton nor full Newton offer any guarantees on the positive-definiteness of the blocks. In our experiments, the additional measures required for detecting and treating indefiniteness can easily undo the potential advantage of using more accurate Hessian information.

\paragraph{Combination with L-BFGS}
As we show in Sec. \ref{sec:results}, Gauss-Newton leads to very good convergence in many cases and our sparse formulation makes this approach highly efficient. Nevertheless, Gauss-Newton is not a true second-order method and, depending on the problem, the missing derivative information can slow down convergence. 
For such cases, combining Sparse Gauss-Newton with L-BFGS can be an attractive alternative: even though L-BFGS requires only first-order derivatives, it approximates second-order information in its inverse Hessian from rank-one updates with past gradients. Similar in spirit to \cite{Kovalsky:2016:AQP:2897824.2925920,Liu:2017:QMR:3072959.2990496,Zhu:2018:BCQ:3197517.3201359}, we use Sparse Gauss-Newton to initialize the inverse Hessian approximation in L-BFGS. This amounts to solving a linear system each time a new search direction is computed. We provide an evaluation of this approach in Sec. \ref{sec:results}.

\paragraph{Block Solve}
If the objective $f$ does not directly depend on the design parameters, the block structure for (\ref{eq:sparseSystemGN}) simplifies to $B=0$ and $C=0$. If the constraint Jacobian $\partial \bc/\partial \bp$ is invertible, upon block substitution, the solution $\delta \bp$ is obtained by solving the linear system
\begin{equation}
	\frac{\partial \bc}{\partial \bp} \delta \bp =  -\frac{\partial \bc}{\partial \bx} \delta \by \ , \quad \text{with} \quad \delta \by= A^{-1}\frac{\partial f}{\partial x}^T \ . 
\end{equation} 
See Appendix \ref{app:blockSolve} for a detailed derivation. If the objective is not a simple $L_2$ distance, then $A\neq I$ and we must solve an additional linear system to obtain $\delta \by$.
We show in Sec. \ref{sec:results} that, where applicable, this block solve can accelerate the computation of the search direction by another $30\%$ and more compared to the Sparse Gauss-Newton baseline.

\section{Results}
\label{sec:results}

We evaluate the performance of our Sparse Gauss-Newton (SGN) solver on a set of inverse design problems. Besides illustrating different applications, each of these problems differs in terms of the ratio between parameters and state variables, the connectivity among variables, as well as their degree of nonlinearity and convexity. 
We are primarily interested in assessing the relative performance of SGN and dense Gauss-Newton (DGN), and how this ratio evolves as a function of problem size. Since both methods give the same results, we only provide average computation times for computing search directions in most cases. Additionally, we also provide total computation times on selected examples and compare to alternative approaches. We measure convergence in terms of suboptimality, which we define as the objective function value minus its value at the minimum.

\paragraph{Solving the saddle point problem}
 We used the PARDISO LU direct solver from Intel's Math Kernel Library (MKL) for solving the indefinite sparse linear systems. This solver performed robustly and efficiently for all problem types and resolutions except for the cloth example, where it returned solutions of insufficient accuracy. Instead of tweaking solver parameters per problem, we opted for a robust fall-back strategy based on the iterative BiCGSTAB method~\cite{van1992bi}, using PARDISO's \LDLT decomposition of the stabilized matrix as preconditioner.
 Specifically, we add the vector $[
    \varepsilon_x \mathbb{1}_{n_x}^T,    \mathbb{0}_{n_p}^T  ,
    - \varepsilon_{\lambda} \mathbb{1}_{n_c}^T
    ]^T$
to the diagonal of the matrix with
$\varepsilon_x = 10^{-6}$ and $\varepsilon_{\lambda} = 10^{-6}$.
We found this strategy to work well in practice, requiring only a few BiCGSTAB iterations to solve the system to high accuracy. It should be noted that the optimal stabilization of the lower right block is scale dependent and should be chosen according to the norm of the constraint Jacobian.  See~\cite{benzi2005numerical} for more details on stabilization, and on the numerical solution of saddle point problems in general.

 We also experimented with iterative solvers such as BiCGSTAB and GMRES with a variety of commonly used preconditioners, ranging from simple diagonal scaling (Jacobi)  to incomplete factorization (ILUT) methods. For the problems considered in this work, however, these iterative methods were either much slower or failed to converge at all. Though we expect iterative solvers to eventually outperform direct solvers for increasingly large problems sizes, we consider this topic beyond the scope of this work. For Sparse Newton and Sparse GGN we used the inertia revealing feature of PARDISO's \LDLT decomposition to test for positive-definiteness on the nullspace of the constraint Jacobian (i.e., second-order optimality conditions) and added diagonal regularization if necessary~\cite{han1985inertia}. For the trust region method we used \textit{trlib}~\cite{lenders2018trlib} to solve the trust region subproblem. We solve the reduced linear systems (DGN) using Eigen's built-in Cholesky decomposition.
 All measurements that we present here were done on an Intel i7-6700K quad-core with 16GB of RAM.

\paragraph{Computing Equilibrium States}
All methods based on sensitivity analysis must recompute the equilibrium state through forward simulation after each parameter update. Forward simulation amounts to a nonlinear minimization problem, whose objective function depends on the application. In each case, we ensure monotonicity in the objective using a backtracking line search.
We use standard computational models described in the literature. Derivatives with respect to state and design parameters are computed analytically using pre-compile-time automatic differentiation.

\begin{figure}
	\centering
	\includegraphics[width=0.9\linewidth]{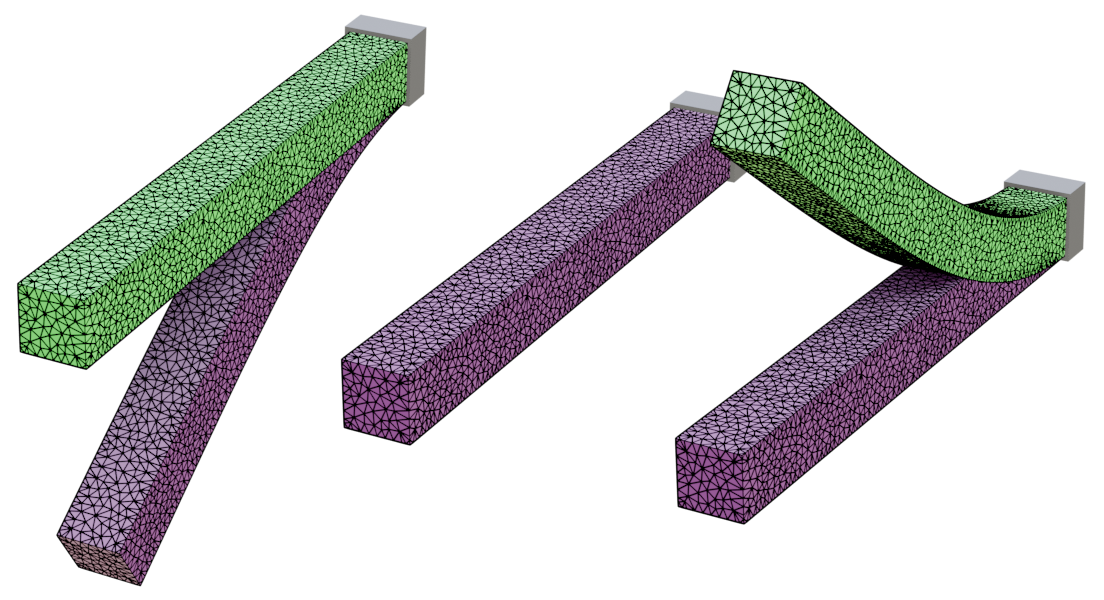}	
	\includegraphics[width= 0.49\linewidth]{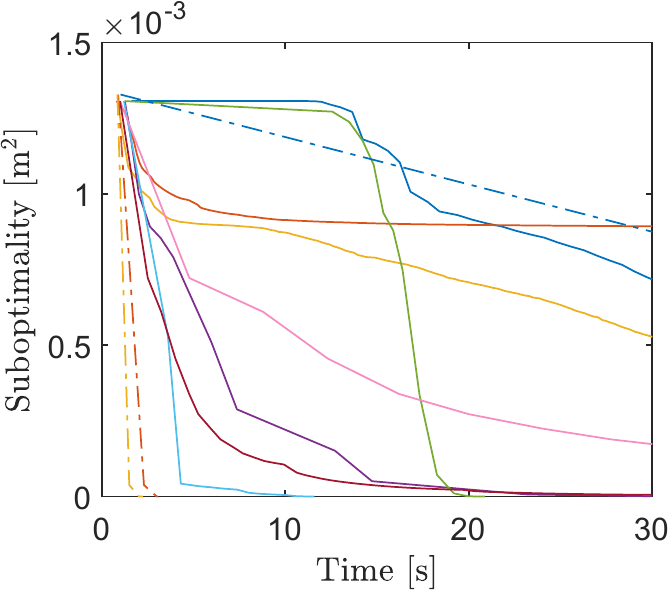}
	\includegraphics[width= 0.49\linewidth]{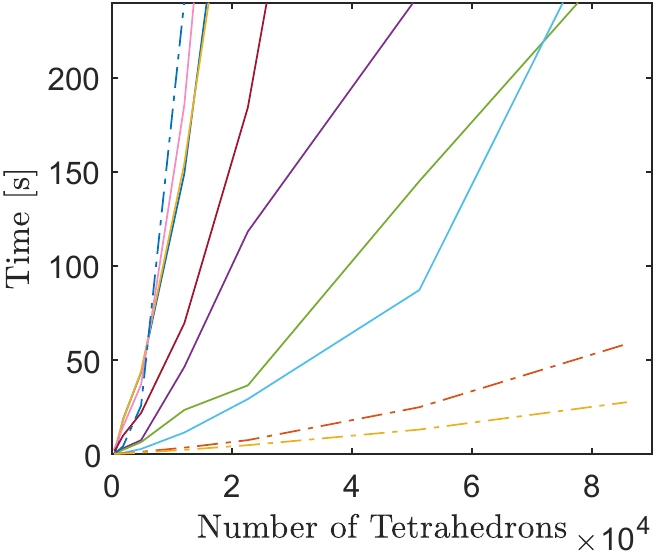}
	\includegraphics[width=0.93\linewidth]{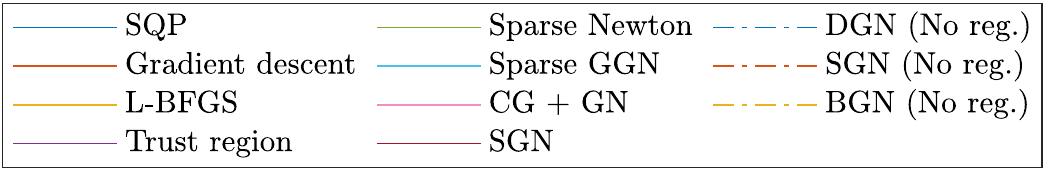}
	\caption{Performance comparison for different solvers on an inverse elastic design problem. \textit{Top left}: initial rest shape (\textit{green}) and corresponding deformed state (\textit{purple}). \textit{Top middle}: target shape. \textit{Top right}:  optimized rest shape (\textit{green}) and corresponding deformed state (\textit{purple}). \textit{Bottom left}: objective value vs. computation time for a mesh size of 3228 vertices. \textit{Bottom right}: computation time vs. problem size.
	}
	\label{fig:solidBar}
\end{figure}

\begin{figure*}[ht]
	\centering
	\includegraphics[scale=0.65]{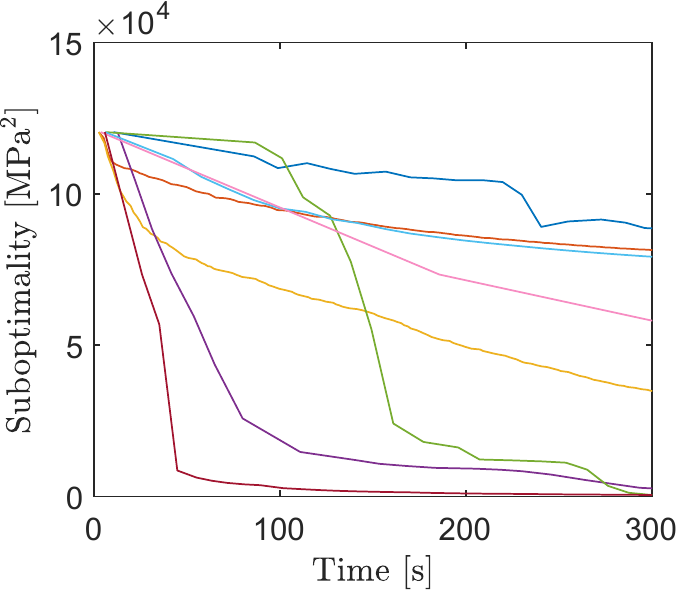}
	\includegraphics[scale=0.65]{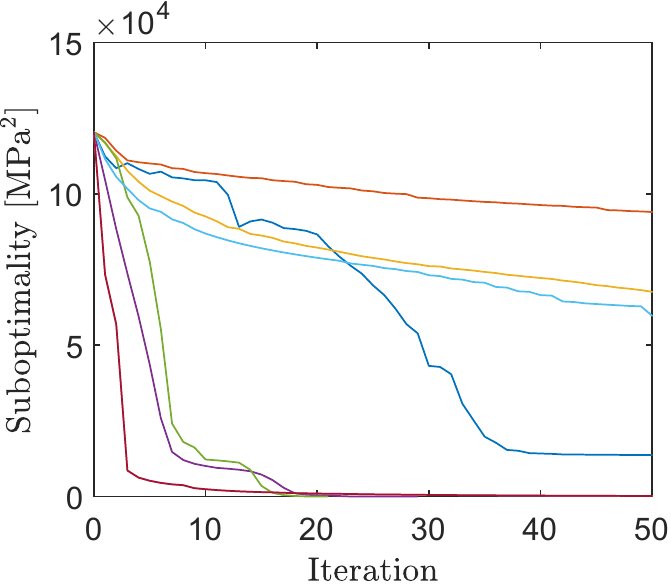}
	\includegraphics[scale=0.65]{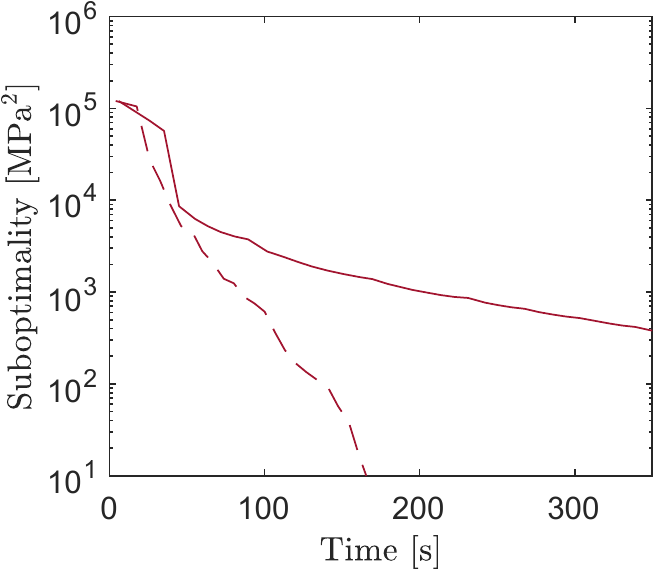}
	\raisebox{5.9mm}[0pt][0pt]{
	\hspace{2mm}
	\includegraphics[scale=0.077]{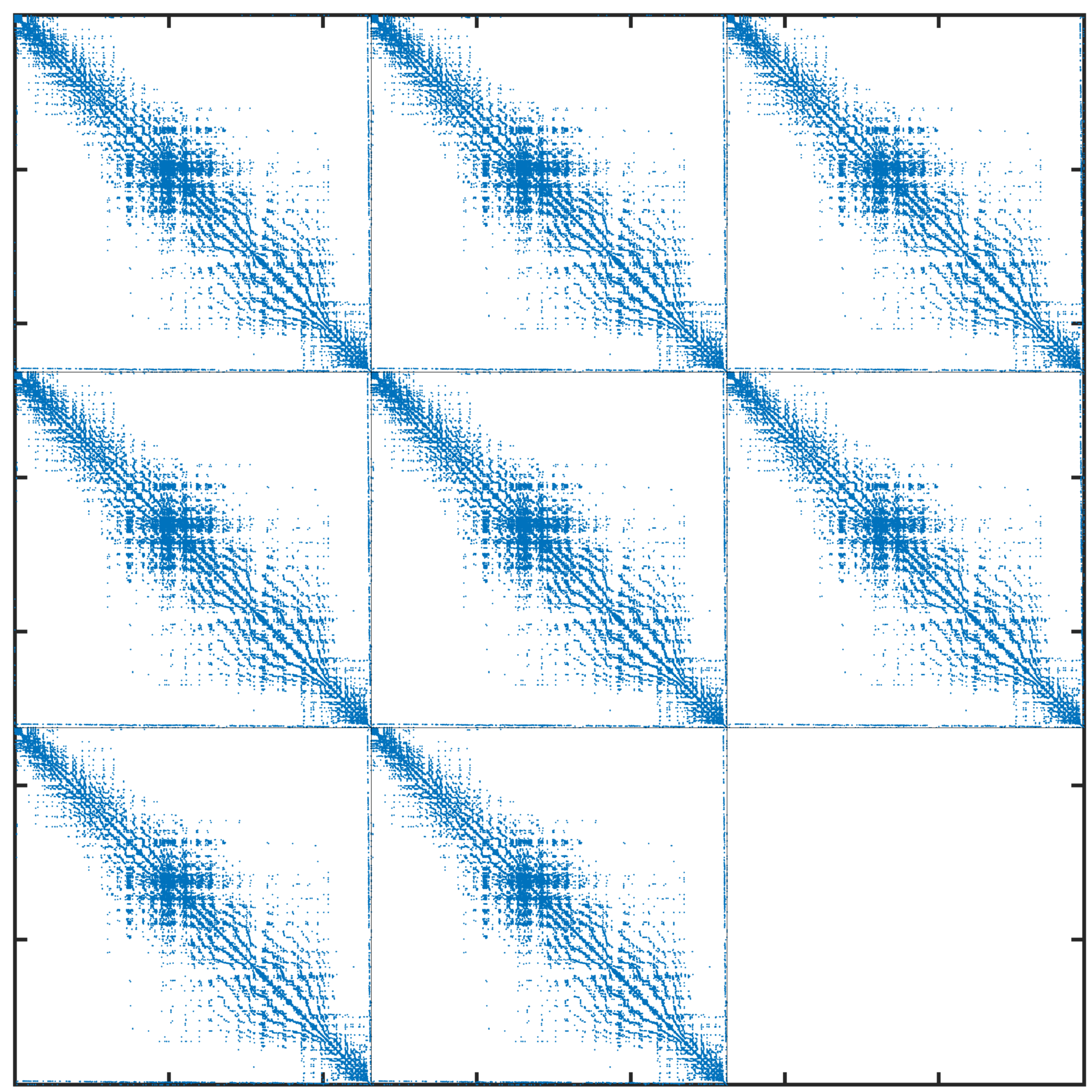}
	}
	\includegraphics[scale=0.65]{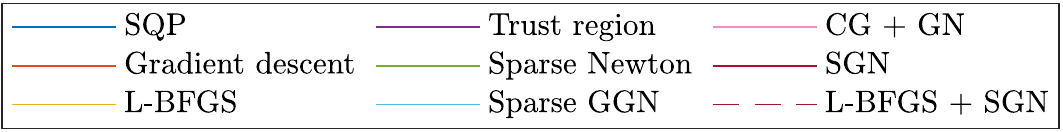}
	\caption{Convergence of different solvers for the shell roof example with 15,443 vertices. Dense Gauss-Newton is not listed since the linear solver ran out of memory when computing the reduced Hessian. 
	The sparsity pattern of the saddle point system, shown on the right, reveals a repetitive structure resulting from similar stencils for objective and constraints. The dense hessian contains $2.13\cdot10^9$ entries whereas the sparse KKT matrix contains $1.77\cdot10^7$ nonzero entries. PARDISO reported $2.18\cdot 10^8$ nonzero entries in the decomposition.
 }
	\label{fig:shellRoof}
\end{figure*}

\subsection{Inverse Elastic Design}
Our first example considers gravity compensation for a simple elastic bar clamped on one side and subjected to gravity; see Fig. \ref{fig:solidBar}. The goal is to find a rest state mesh such that the resulting equilibrium state is as close as possible to a given target shape,
\begin{align}
	f(\bx,\bp) = \frac1{2n_x} \norm{\bx - \bx_\text{target}}^2 + R(\bp) \ .
\end{align}
To prevent inversions in the rest shape $\bp$, we add a nonlinear least-squares regularizer $R(\bp)$ that penalizes per-element volume changes. For the forward simulation,  we use standard linear tetrahedron elements and a Neo-Hookean material with Young's modulus $E=10^6Pa$ and Poisson's ratio $\nu=0.45$. As termination criterion we used $\norm{d f / d\bp} \leq 10^{-5}$ where the gradient norm at the beginning of the optimization was close to $2\cdot 10^{-2}$ for all resolutions.

While there are specialized solvers for gravity compensation problems \cite{Mukherjee:2018:ITS:3242771.3203196,Chen:2014:ANM,Ly2018inverse}, we use this example as a benchmark for evaluating the relative performance of SGN, DGN, as well as sparse versions of full Newton and Generalized Gauss-Newton (GGN). For comparison, we also add alternative approaches based on sensitivity analysis that have recently been introduced or used in the visual computing community: the trust region solver by Panetta et al. \shortcite{Panetta:2019:XNC:3306346.3323040}, as well as standard Gradient Descent and L-BFGS. We also include performance data for our implementation of Sequential Quadratic Programming (SQP) using Newton's method on the KKT-conditions (see Appendix \ref{sec:app}) and an exact $L_1$ merit function.
 As a further reference point, we also compare to the conjugate gradient method (CG) applied directly to Equation~{\ref{eq:reducedGN}} and use back-substitutions to avoid forming the dense matrix. We refer to this alternative as \textit{CG + GN}. As termination criterion for CG, we use a relative residual threshold $\eta $ of $10^{-3}$ 
  for all examples.
We furthermore note that, when dropping the regularizer, the shape objective does not directly involve the design parameters, allowing us to apply the block Gauss-Newton (BGN) method described in Sec. \ref{sec:GeneralizationGN}. Interestingly, we found that the Gauss-Newton methods produce smooth rest state deformations even without regularizer, whereas all other methods led to inversions in that case. In the case of CG + GN, the default value of the threshold $\eta$ was not low enough to avoid inversions and thus this method was not included.

From Fig. \ref{fig:solidBar} (\textit{left}) it can be seen that sparse methods without regularizer, i.e., our SGN solver and its block-solve version (BGN, outperform all other solvers by a relatively large margin.
Sparse GGN and the trust region solver rank first among the alternative approaches, keeping track with SGN when using regularization. Gradient descent and L-BFGS initially perform well but progress quickly slows down. The remaining methods are not competitive on this example. The superior scaling behavior of BGN and SGN without regularizer becomes evident from Fig. \ref{fig:solidBar} (\textit{right}). Although the difference between BGN and SGN is small compared to the other methods, the block-solve version still offers about $30\%$ performance increase for lower resolutions and more than $50\%$ for higher resolutions.

\subsection{Shell Form Finding}
For the solid bar example, the objective was a simple $L_2$-distance on the equilibrium state which, even with regularization, did not directly couple parameters and state.
In our second example, we investigate a case in which these quantities are strongly coupled---a form finding problem for a $50m\times50m$ concrete shell roof, inspired by the works of architect F{\'e}lix Candela; see \cite{Tomas10Optimality} and Fig. \ref{fig:teaser}. We model the roof using discrete shells~\cite{Grinspun:2003:DS:846276.846284} with material parameters corresponding to $E=28GPa$, $\nu=0.2$ as well as a density of $2500kg/m^3$.
The design task consists in finding a rest shape for the shell such that the equilibrium state under gravity minimizes a stress objective in nonlinear least squares form. We use a stress model following~\cite{gingold2004discrete}. To encourage smooth solutions, we additionally use a regularizer $R(\bp)$ that penalizes curvature in the rest state and per-triangle deformations. The resulting objective is
\begin{align}
    f(\bx, \bp) = \sum_i ||\sigma_i(\bx, \bp)||^2 + R(\bp) \ ,
\end{align}
where $\sigma_i$ denotes the Cauchy stress for element $i$.
We note that this objective couples $\bx$ and $\bp$, meaning that we cannot apply BGN, and its Hessian is not guaranteed to be positive-definite.
As can be seen from Fig. \ref{fig:shellRoof} (\textit{left}), SGN clearly outperforms all other methods. Interestingly, the sparse Generalized Gauss-Newton performs similar to Gradient Descent and far worse than L-BFGS, which can be attributed to the Hessian approximation becoming indefinite.

Although SGN rapidly decreases the initial objective by almost two orders of magnitude, the log-scale plot in Fig. \ref{fig:shellRoof} (\textit{right}) reveals that convergence slows down afterwards. However, the combination of SGN with L-BFGS as described in Sec. \ref{sec:SGN} is able to sustain rapid convergence in this case.  

For this example, dense Gauss-Newton ran out of memory when trying to compute the dense reduced Hessian. Fig. \ref{fig:teaser} nevertheless compares timings between SGN and DGN for smaller problem sizes, indicating that, due to its different asymptotic complexity, SGN breaks even already for problem sizes beyond a few hundred parameters.

We note that, in order to allow for larger geometry changes, we used a lower regularization weight for the result shown in Fig. \ref{fig:teaser} than for the comparison shown in Fig. \ref{fig:shellRoof} since otherwise, not all methods would converge to the same solution.

\begin{figure}
	\centering
	\raisebox{0.3\height}{\includegraphics[width=0.44\linewidth]{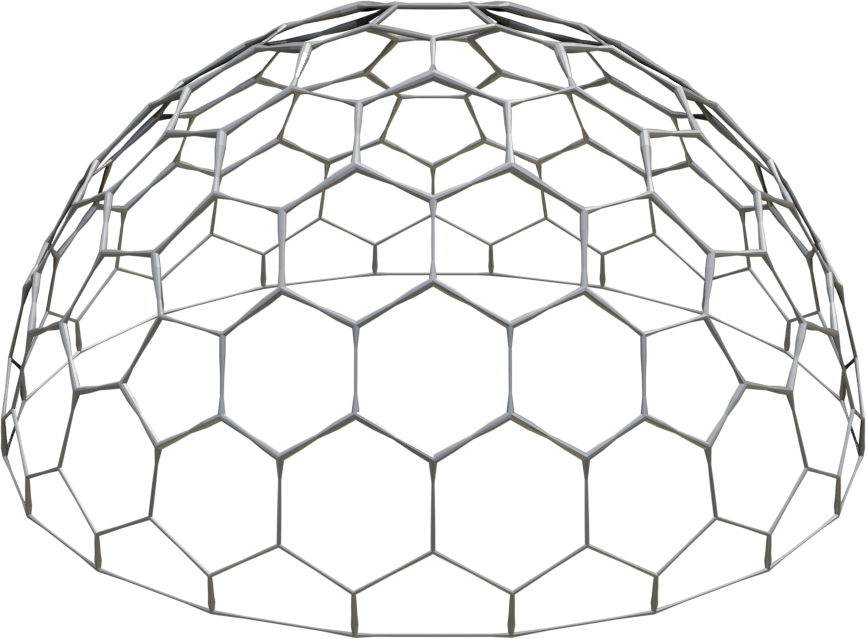}}
	\includegraphics[width=0.55\linewidth]{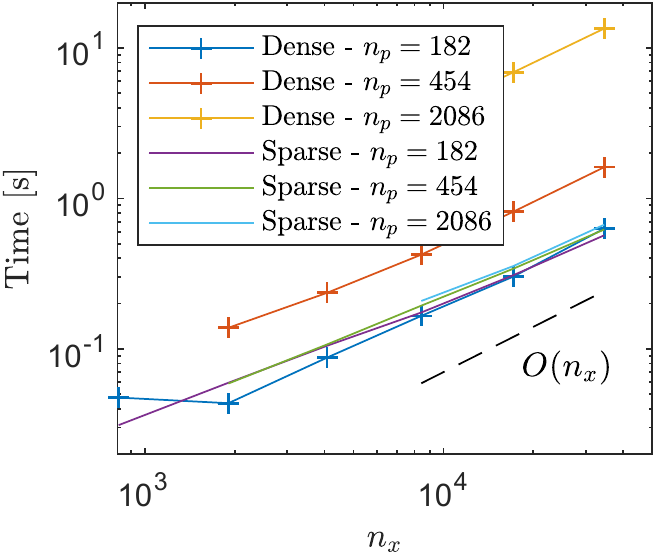}
	\caption{Time to compute the search direction for the rod dome as a function of state size $n_x$. We subdivide the edges to add more state variables and compare timings for different numbers of parameters.}
	\label{fig:rodDome}
\end{figure}

\subsection{Rod Dome}
In the third example, we consider an inverse design problem for a hemispherical dome made from interconnected elastic rods. 
The dome is subject to a force applied at the top and we impose Dirichlet boundary conditions on the bottom. The design parameters are radii for the rods that are prescribed at connection points and interpolated along the rods. The goal, then, is to find parameters that minimize a weighted combination of the total mass of the structure---approximated as the $L_2$ norm of the parameter vector---and its displacement under load. We define the design objective in nonlinear least-squares form as
\begin{align}
	f(\bx, \bp) = \frac12 \norm{ \bx - \bx_\text{undef} }^2 + \frac12 \norm{ \bp }^2_V +  f_\text{bounds}(\bp) \ ,
\end{align}
where $f_\text{bounds}(\bp)$ is a log-barrier term enforcing lower and upper limits on the radii. The mapping $\norm{\cdot}_V^2$ approximates the volume of the structure using conical frustums.
As simulation model, we used discrete elastic rods \cite{Bergou:2010:DVT:1778765.1778853} together with the extension to rod networks by Zehnder et al. \shortcite{Zehnder16DSO} and set material parameters to $E=69GPa$ and $\nu=0.33$ such as to emulate aluminum rods.

The problem setup as described above allows us to independently vary the number of parameters and state variables. As can be seen from Fig. \ref{fig:rodDome}, for small numbers of parameters, DGN outperforms SGN. However, for a given number of state variables, SGN shows only a slight growth in computation time when the number of parameters is increased. The situation is very different for DGN and the break even point for this example is at around 200 parameters. It is worth noting that both dense and sparse Gauss-Newton show a similar increase in computation time with increasing number of state variables. For SGN, we conjecture that the linear scaling is due to the particular sparsity structure induced by the rod dome, which---except for the connecting nodes---exhibits a band-diagonal structure. 

\subsection{Car Control}
The examples studied so far investigated the performance and scalability of our method for static equilibrium problems. We now turn to inverse dynamics problems in which we seek to optimize for control parameters such that the resulting dynamic equilibrium motion optimizes given design goals.
For the first of two examples, we consider the problem of steering a simple self-driving car such as to move from a given starting position to a prescribed goal configuration. 
The state of the car $\bx=(p_x,p_y,\theta)$ is described by three variables representing its position $(p_x, p_y)$ on the plane and angle $\theta$ with respect to the first coordinate axis. The parameters $\bp$ are control variables that include the speed $v$ in forward direction and the steering angle $s$ relative to the forward direction.
The motion of the car is described by the simple first-order ODE $\dot{\bx} =(v \cos \theta, v \sin \theta, v \tan s)$, which we formulate in constraint form as 
\begin{align}
	\bc^{t_i}(\bx, \bp) = \bx^{t_i} - I_{EE}(\bx^{t_{i-1}}, \bp^{t_i}) \ ,
\end{align}
where the time integration routine $I_{EE}$ takes state variables $\bx^{t_{i-1}}$ and control variables $\bp^{t_i}$ at the beginning of a given time step and returns the corresponding new state $\bx^{t_i}$. We use explicit Euler integration with a step size of $\frac1{30}s$ and run the simulation for $N$ steps. The total number of state and problem variables is therefore $3N$ and $2N$, respectively.
The objective that we minimize measures the difference between final and target states as 
\begin{align}
	f(\bx, \bp) = \frac{w_\text{pos}}{2}||\bx^N-\bx_\mathrm{target}||^2  + \frac{w_\text{dir}}{2}||\mathbf{d}(\bx^N)-\mathbf{d}_\mathrm{target}||^2 + R(\bp)\ ,
\end{align}
where $\mathbf{d}$ is the vector that points in the forward direction of the car and $R(\bp)$ is a smoothness term that penalizes differences in control variables over time. Except for L-BFGS-B, we additionally enforce bounds on the maximum velocity and steering angle by filtering the search direction. 

\begin{figure}
	\centering
	\raisebox{0.22\height}{\includegraphics[width=0.49\linewidth]{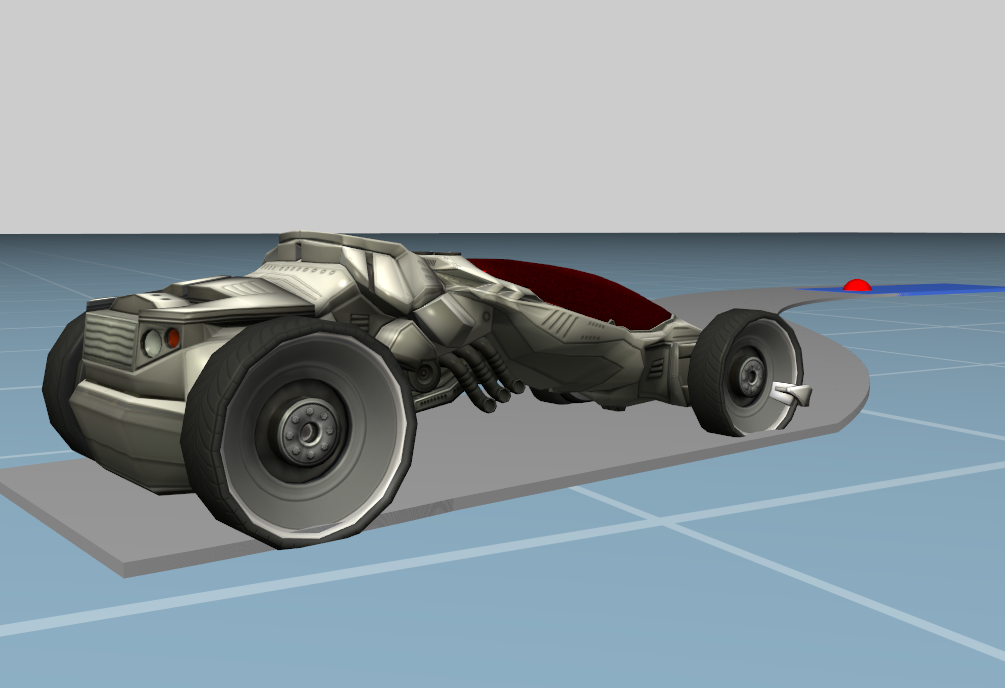}}
	\includegraphics[width=0.49\linewidth]{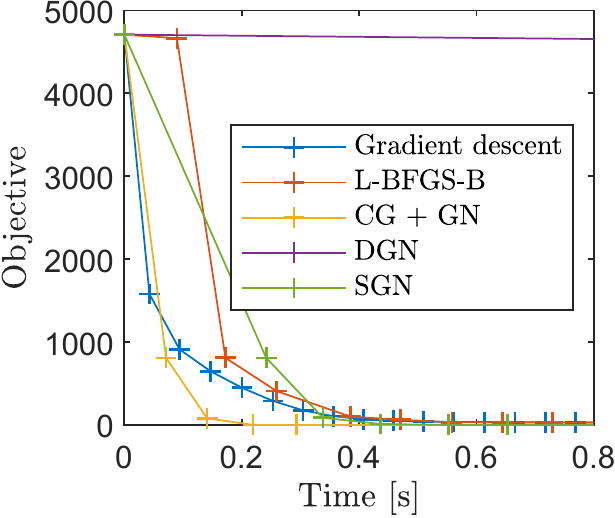}
	\caption{Performance comparison for different solvers on the car example using 5000 time steps.}
	\label{fig:car}
\end{figure}
\begin{figure}
	\centering
	\includegraphics[width=0.50\linewidth]{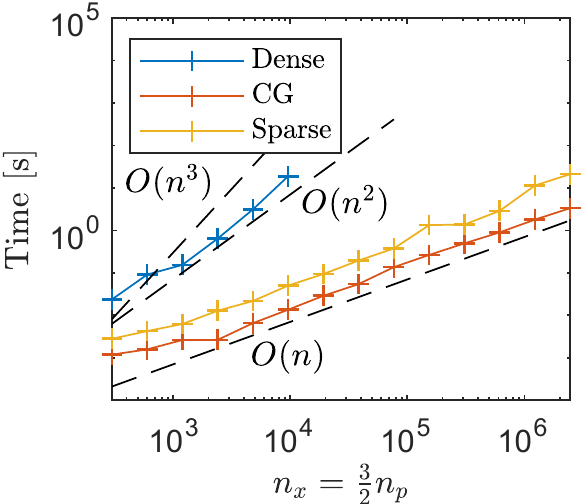}
	\raisebox{0.01\height}{
	\includegraphics[width=0.45\linewidth]{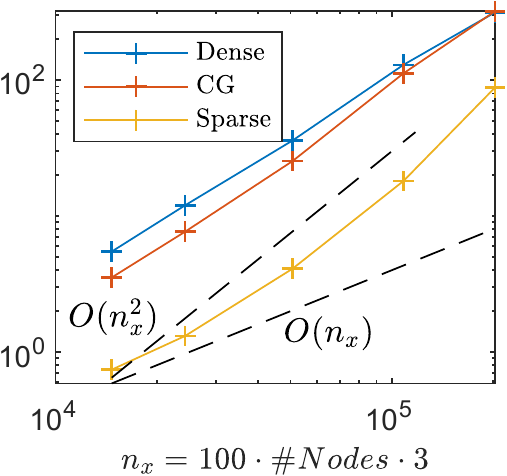}}
	\caption{Average time for computing Gauss-Newton search directions for the car (\textit{left}) and cloth (\textit{right}) control examples using the dense vs. our sparse Hessian and the CG method. \textit{Left}: we increase the number of time steps, thus increasing problem size in terms of both state variable and parameters.
	\textit{Right}: we increase the number of vertices for the cloth while keeping the number of time steps fixed and thus the number of control variables constant.
	}
	\label{fig:scalingCarCloth}
\end{figure}

For this relatively simple and non-stiff problem, Gradient Descent performs comparatively well and is only slightly slower than Sparse Gauss-Newton. While CG + GN outperforms all other methods in this example, the difference between sparse and dense Gauss-Newton is again substantial.

\subsection{Cloth Control}
In our second inverse dynamics example we use the method by Geilinger et al.~\shortcite{geilinger2020add} to find time-varying handle positions for two corners of a sheet of cloth such that it moves from a given start configuration to a target state with prescribed positions. As best seen in the accompanying video, the optimized handle motion leads to two flip-overs, one in place and one with horizontal movement. As simulation model, we use a standard mass-spring system together with implicit Euler for time integration. To define the map between parameters and state for sensitivity analysis, we express the corresponding update rule in constraint form as
\begin{equation}
\bc^{t_i} = \bx^{t_i} - I_{IE}(\bx^{t_{i-1}}, \bp^{t_{i-1}}, \bp^{t_i} )
\end{equation}
where, given vertex positions $\bx^{t_{i-1}}$ as well as control forces $\bp^{t_{i-1}}$ and $\bp^{t_i} $, the implicit Euler rule $I_{IE}$ returns the new state $\bx^{t_i}$.
The cloth comprises 100 vertices and we perform $N$ simulation steps, leading to a total of $300N$ state and $6N$ control variables. We simulate for $1.66s$ of virtual time and set the step size accordingly. 
The goal of matching the target state is expressed as 
\begin{equation}
    f(\bx, \bp) = \frac12 \sum_{j \in \mathcal{S}} \norm{\bx^j - \tilde{\bx}^j }^2 
	+ R(\bp) \ ,
\end{equation}
where $\mathcal{S}$ is a set of keyframes and $R(\bp)$ is a regularizer that penalizes deviations from initial handle positions, handle velocities, and cloth velocities.

\begin{figure}
	\centering
	\raisebox{0.2\height}{\includegraphics[width=0.44\linewidth]{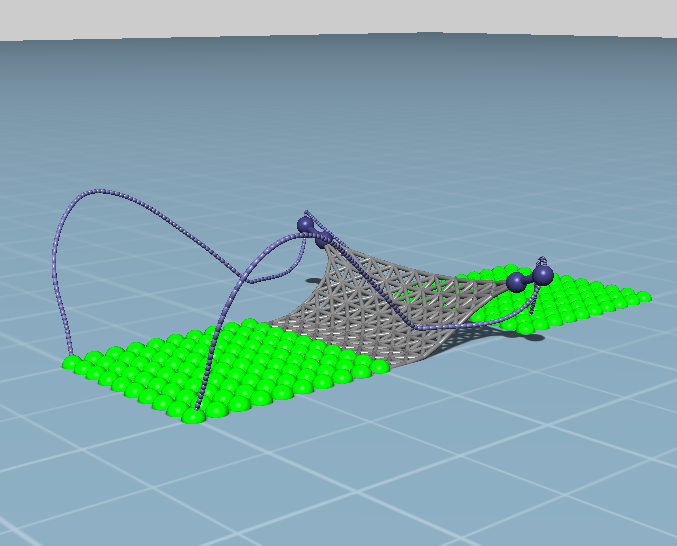}}
	\includegraphics[width=0.55\linewidth]{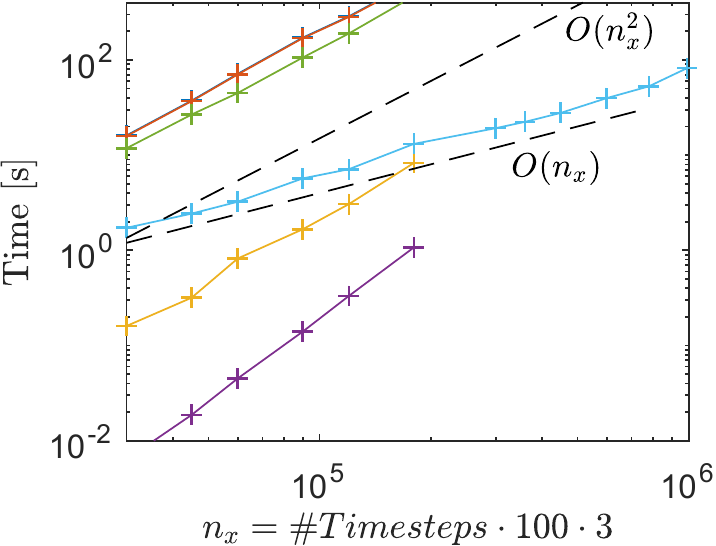}\\
	\vspace{0.3em}
	\hfill \includegraphics[width=0.65\linewidth]{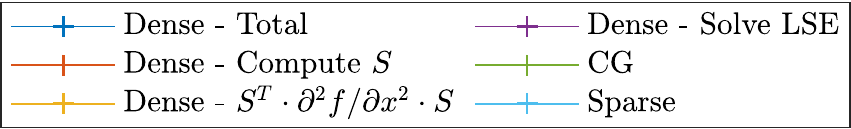}\\
	\caption{Time required to compute the search direction for the cloth control problem. \textit{Right}: we increase the number of time steps while keeping mesh resolution constant such that the problem size increases in terms of both state variables and parameters. The dense solution strategy ran out of memory for $n_x > 1.8 \cdot 10^{5}$. }
	\label{fig:cloth}
\end{figure}

For this example, we additionally provide break downs for the cost of DGN. Somewhat surprisingly, the factorization of the dense system only accounts for a fraction of the total time, which is dominated by the computation of the sensitivity matrix.
Fig. \ref{fig:cloth} shows that SGN outperforms DGN for all problem instances, whose size we control by the number of time steps used for forward simulation. 

As shown in Fig. \ref{fig:scalingCarCloth} (\textit{right}), when changing only the state size but keeping the number of parameters fixed, DGN scales better than SGN but breaks even only for very large problem sizes that  are intractable for dense solvers on current desktop machines. The difference in scaling between SGN and DGN can be explained by the fact that, unlike for the rod dome, the cost of solving the sparse system scales quadratically with state size which, in turn, can be attributed to the higher connectivity among state variables.

\section{Conclusions}

We presented a sparse Gauss-Newton solver for sensitivity analysis that eliminates the poor performance and scaling of the dense formulation. We have shown on a diverse set of examples that SGN scales asymptotically better than its dense counterpart in almost all cases. We have furthermore provided numerical evidence that SGN outperforms existing solvers for equilibrium-constrained optimization problems on many examples.

\subsection{Limitations \& Future Work}
All of our performance tests use a sparse direct solver, which imposes certain limits on the maximum problem size. One potential option for extending SGN to very large problems would be to use iterative saddle-point solvers such as the Uzawa algorithm. 

Sparse Gauss-Newton transforms a dense $n_p\times n_p$ system into a sparse system of dimension $(2n_x+n_p)\times(2n_x+n_p)$, where $n_x$ and $n_p$ denote the number of state and design variables, respectively. This transformation is only advantageous if the number of parameters is sufficiently large. For example, when optimizing for the Young's modulus of a homogeneous elastic solid, the dense $1\times1$ Hessian will always be faster to invert than its sparse counterpart. At the other extreme, SGN will generally be much faster when optimizing for per-element material coefficients. While the exact break-even point depends on the problem, our experiments show that already for small to moderate $n_p$, SGN outperforms DGN. 

Problems with sequential dependence between state variables (resulting, e.g., from time discretization) lead to a special block structure that can be leveraged to accelerate computation of the dense sensitivity matrix. We did not consider such problem-specific optimizations here.

Using a CG-based solver can be an attractive alternative, especially when fast back-substitutions are available. A disadvantage is that, for optimal performance, residual thresholds must be tweaked for each example. Furthermore, the convergence rate of CG depends strongly on the problem. Developing specialized pre-conditioners for  Eq.~\ref{eq:reducedGN} might be an interesting option for future work.

Our formulation assumes that the objective function can be expressed in nonlinear least squares form. While not all problems exhibit this particular form, they can often be reformulated or reasonably well approximated in this way.

Some of our examples include bound constraints on the parameters, which we enforced through log-barrier penalties or by simple projection of the search direction. The latter approach, however, is neither efficient nor guaranteed to converge in the general case. Incorporating bound and inequality constraints in our formulation is an interesting direction for future work.

We did not directly analyze the impact of the cost per simulation on optimization performance. In general, problems for which forward simulation is fast will benefit more from solvers that rely only on first-order derivative information but require more function evaluations. However, we believe that our selection of examples is representative for a large range of stiff inverse problems encountered in practice. For the case of non-stiff problems, on the other hand, inexact descent methods can be an attractive alternative \cite{Yan:2018:IDM}.

Finally, it would be interesting to extend our approach to efficiently compute second-order sensitivity information in the context of design space exploration for multi-objective optimization problems \cite{Schulz:2018:IED}.

\section*{Acknowledgements}
This research was supported by the Discovery Accelerator Awards program of the Natural Sciences and Engineering Research Council of Canada (NSERC) and
the European Research Council (ERC) under the European Union’s Horizon 2020 research and innovation program (Grant No. 866480). We thank the reviewers for their valuable feedback and suggestions.

\appendix

\section{Equivalence Result}
\label{sec:app}
We show that, for general objectives, using the dense system obtained for second-order sensitivity analysis and the sparse system (\ref{eq:sparseSystemGN}) lead to the same search direction. This proof also shows the equivalence between sensitivity analysis and sequential quadratic programming for the special case of equality constraints that are enforced to stay satisfied at all times. 

\begin{theorem}\label{th:sparseSolve}
Let $A, B, C, \frac{\partial c}{\partial x}, \frac{\partial c}{\partial p}$ denote sparse matrices with the correct dimensions and let $H = \frac{d \bx}{d \bp} A \frac{d \bx}{d \bp}^T + B \frac{d \bx}{d \bp}+ \frac{d \bx}{d \bp}^T B^T + C$. To compute the solution to the dense linear system $H \cdot \delta \bp = - \frac{d f}{d \bp}^T$, we can equivalently solve the larger sparse system (\ref{eq:sparseSystemGN}).
\end{theorem}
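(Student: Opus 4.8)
The plan is to perform block Gaussian elimination (equivalently, to form the Schur complement) on the sparse saddle-point system \eq{sparseSystemGN} and to verify that the resulting equation in the $\delta\bp$ block coincides exactly with the dense system $H\,\delta\bp = -\frac{d f}{d \bp}^T$. Throughout I will use that $\frac{\partial\bc}{\partial\bx}$ is square and invertible, which is guaranteed by the standing assumptions $n_\bc = n_\bx$ and full rank of the constraint Jacobian; this is the only structural hypothesis the argument needs.

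First I would read off the three block rows of \eq{sparseSystemGN}. The bottom row reads $\frac{\partial\bc}{\partial\bx}\delta\bx + \frac{\partial\bc}{\partial\bp}\delta\bp = 0$, which I can solve for the state increment to obtain $\delta\bx = -\left(\frac{\partial\bc}{\partial\bx}\right)^{-1}\frac{\partial\bc}{\partial\bp}\,\delta\bp = \frac{d\bx}{d\bp}\,\delta\bp$, recovering precisely the sensitivity relation \eq{dxdp}. The top row, $A\,\delta\bx + B^T\delta\bp + \frac{\partial\bc}{\partial\bx}^T\delta\blambda = 0$, can then be solved for the multiplier increment, giving $\delta\blambda = -\left(\frac{\partial\bc}{\partial\bx}\right)^{-T}\!\bigl(A\,\delta\bx + B^T\delta\bp\bigr)$.

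Next I would substitute both expressions into the middle row $B\,\delta\bx + C\,\delta\bp + \frac{\partial\bc}{\partial\bp}^T\delta\blambda = -\frac{d f}{d \bp}^T$. The key algebraic step is to recognize the transpose of the sensitivity matrix: since $\frac{d\bx}{d\bp} = -\left(\frac{\partial\bc}{\partial\bx}\right)^{-1}\frac{\partial\bc}{\partial\bp}$, transposing yields $-\frac{\partial\bc}{\partial\bp}^T\left(\frac{\partial\bc}{\partial\bx}\right)^{-T} = \frac{d\bx}{d\bp}^T$. Using this to eliminate $\delta\blambda$ and then $\delta\bx$ collapses the middle row into $\left(\frac{d\bx}{d\bp}^T A\frac{d\bx}{d\bp} + B\frac{d\bx}{d\bp} + \frac{d\bx}{d\bp}^T B^T + C\right)\delta\bp = -\frac{d f}{d \bp}^T$, whose coefficient matrix is exactly $H$ in the form given in \eq{reducedGN}. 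This shows that the $\delta\bp$ block of any solution of \eq{sparseSystemGN} solves the dense system, and conversely that the dense solution extends to a solution of the sparse system through the formulas above for $\delta\bx$ and $\delta\blambda$; uniqueness of $\delta\bp$ then yields the claimed identity of search directions.

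The proof is essentially bookkeeping, so I do not expect a deep obstacle; the one point that demands care is the placement of transposes, in particular confirming that the Schur-complement contribution $\frac{\partial\bc}{\partial\bp}^T\delta\blambda$ reproduces both the quadratic term $\frac{d\bx}{d\bp}^T A\frac{d\bx}{d\bp}$ and the cross term $\frac{d\bx}{d\bp}^T B^T$ with the correct signs. A secondary point worth stating explicitly is well-posedness: invertibility of $\frac{\partial\bc}{\partial\bx}$ legitimizes the two eliminations, and invertibility of the Schur complement $H$ (which is positive-definite for the Gauss-Newton blocks) guarantees that $\delta\bp$ is uniquely determined, so that the two systems genuinely produce the same step.
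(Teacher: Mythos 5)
Your proof is correct and is essentially the paper's own argument: the paper proves the theorem by pointing to the construction in Sec.~\ref{sec:SGN}, which introduces $\delta\bx$ and $\delta\blambda$ to pass from the dense system to the sparse one, and your block (Schur-complement) elimination is exactly that construction run in reverse, hinging on the same key identity $\frac{d\bx}{d\bp}^T = -\frac{\partial\bc}{\partial\bp}^T\left(\frac{\partial\bc}{\partial\bx}\right)^{-T}$. Your explicit statement of the converse direction (extending a dense solution to a sparse one) and of uniqueness via invertibility of $H$ is a somewhat more careful write-up of the same equivalence, not a different method.
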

\begin{proof}
See construction in Sec. \ref{sec:SGN}.
\end{proof}
We next introduce the Lagrangian for the optimization problem (\ref{eq:optProb}) as
\begin{equation}
\Lagr (\bx, \bp, \blambda) = f(\bx, \bp) + \blambda^T \bc(\bx, \bp)
\end{equation}
whose gradient is 
\begin{equation}
\nabla \Lagr (\bx, \bp, \blambda) =
\begin{bmatrix}
\nabla_\bx f + \nabla_\bx \bc ^T  \cdot  \blambda \\
\nabla_\bp f + \nabla_\bp \bc ^T  \cdot  \blambda \\
\bc
\end{bmatrix} \ ,
\end{equation}
where $\blambda$ are the Lagrange multipliers. The first-order optimality (or KKT) conditions correspond to $\nabla \Lagr (\bx, \bp, \blambda)=\mathbf{0}$.
Solving these conditions with Newton's method leads to the so-called KKT system
\begin{equation*}
\label{eq:KKTsystem}
\begin{bmatrix}
\nabla^2_{\bx\bx} f + \nabla^2_{\bx\bx}\bc :\blambda & \nabla^2_{\bx\bp} f + \nabla^2_{\bx\bp}\bc :\blambda  & \nabla_\bx \bc ^T\\
\nabla^2_{\bp\bx} f + \nabla^2_{\bp\bx}\bc :\blambda & \nabla^2_{\bp\bp} f + \nabla^2_{\bp\bp}\bc :\blambda  & \nabla_\bp \bc ^T\\
\nabla_\bx \bc & \nabla_\bp \bc & 0
\end{bmatrix} 
\begin{bmatrix}
    \delta \bx \\
    \delta \bp \\
    \delta \blambda
\end{bmatrix}
 = -
 \begin{bmatrix}
    \nabla_\bx \Lagr\\
    \nabla_\bp \Lagr \\
    \bc
 \end{bmatrix}
\ ,
\end{equation*}
where we used the shorthand $\nabla^2_{\by\mathbf{z}} \bc:\blambda=\sum_i \blambda_i \nabla^2_{\by\mathbf{z}} \bc_i$.
\begin{theorem}
When using the \textit{adjoint} variables as Lagrange multipliers
\begin{equation}
\lambdaNatural = - \left[ \frac{\partial \bc}{\partial \bx} \right]^{-T} \frac{\partial f}{\partial \bx}^T \ ,
\end{equation}
the KKT system and the system for second-order sensitivity analysis, $\frac{d f^2}{d\bp^2}\delta \bp = -\frac{d f}{d\bp}^T$, give the same search direction $\delta \bp$.
\end{theorem}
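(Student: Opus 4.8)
The plan is to reduce the KKT system to a single equation in $\delta\bp$ by eliminating $\delta\bx$ and $\delta\blambda$, and then to show that the resulting reduced matrix and right-hand side coincide exactly with those of the second-order sensitivity system $\frac{d^2 f}{d\bp^2}\delta\bp = -\frac{df}{d\bp}^T$. First I would evaluate the KKT right-hand side at an equilibrium state, where $\bc=\mathbf{0}$, using the adjoint choice $\blambda=\lambdaNatural$. By construction $\frac{\partial\bc}{\partial\bx}^T\blambda = -\frac{\partial f}{\partial\bx}^T$, so the first block $\nabla_\bx\Lagr = \frac{\partial f}{\partial\bx}^T + \frac{\partial\bc}{\partial\bx}^T\blambda$ vanishes exactly, the third block $\bc$ vanishes by equilibrium, and the middle block reduces to $\nabla_\bp\Lagr = \frac{\partial f}{\partial\bp}^T + \frac{\partial\bc}{\partial\bp}^T\blambda = \frac{\partial f}{\partial\bp}^T + \bS^T\frac{\partial f}{\partial\bx}^T = \frac{df}{d\bp}^T$, recovering the sensitivity gradient (\ref{eq:totalGradient}). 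Hence the KKT right-hand side collapses to $-[\,\mathbf{0},\ \frac{df}{d\bp}^T,\ \mathbf{0}\,]^T$, already matching the sensitivity right-hand side.

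Next I would carry out the block elimination. The third KKT row, $\frac{\partial\bc}{\partial\bx}\delta\bx + \frac{\partial\bc}{\partial\bp}\delta\bp = \mathbf{0}$, gives $\delta\bx = \bS\,\delta\bp$ with $\bS$ as in (\ref{eq:dxdp}); the first row solves for $\delta\blambda$ in terms of $\delta\bp$; and substituting both into the second row, together with the identity $\frac{\partial\bc}{\partial\bp}^T\frac{\partial\bc}{\partial\bx}^{-T} = -\bS^T$, yields a reduced matrix $\bS^T H_{\bx\bx}\bS + \bS^T H_{\bx\bp} + H_{\bp\bx}\bS + H_{\bp\bp}$, where $H_{\bullet\bullet}$ denotes the Lagrangian Hessian blocks $\nabla^2 f + \nabla^2\bc:\blambda$. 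Splitting each block into its pure-$f$ part and its $\blambda$-weighted constraint-curvature part separates the reduced matrix into two contributions; the pure-$f$ contribution is precisely the first four terms of the total Hessian (\ref{eq:totalHessian}).

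The crux, and the step I expect to be the main obstacle, is showing that the $\nabla^2\bc:\blambda$ blocks assemble into the remaining term $\sum_i\frac{\partial f}{\partial\bx_i}\frac{d^2\bx_i}{d\bp^2}$ of (\ref{eq:totalHessian}). For this I would differentiate the equilibrium identity $\bc(\bx(\bp),\bp)=\mathbf{0}$ a second time: the first derivative reproduces (\ref{eq:dcdp}), and differentiating once more and isolating the term containing $\frac{d^2\bx}{d\bp^2}$ expresses $\frac{\partial\bc}{\partial\bx}\frac{d^2\bx}{d\bp^2}$ as a combination of the second partials of $\bc$, each contracted with one or two copies of $\bS$. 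Contracting this identity with $\frac{\partial f}{\partial\bx}$ and then invoking the adjoint relation in the form $\frac{\partial f}{\partial\bx_i} = -\sum_k\blambda_k\frac{\partial\bc_k}{\partial\bx_i}$ replaces the leading factor by $\blambda$, turning $\sum_i\frac{\partial f}{\partial\bx_i}\frac{d^2\bx_i}{d\bp^2}$ into exactly the four $\nabla^2\bc:\blambda$ blocks. The delicate part is purely bookkeeping—tracking which index is contracted against which copy of $\bS$, and using symmetry of the mixed second partials to align the two cross terms—while everything else is routine linear algebra. Once the reduced KKT matrix and right-hand side are shown identical to those of the sensitivity system, the two linear systems coincide and therefore yield the same search direction $\delta\bp$, completing the proof.
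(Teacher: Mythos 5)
Your proposal is correct and follows essentially the same route as the paper: both hinge on the key identity that the second-order sensitivity term $\sum_i \frac{\partial f}{\partial \bx_i}\frac{d^2 \bx_i}{d\bp^2}$ equals the $\nabla^2\bc\!:\!\blambda$ contributions, obtained by differentiating the equilibrium identity twice and substituting the adjoint relation. The only cosmetic differences are that you carry out the block elimination of $\delta\bx$ and $\delta\blambda$ explicitly (the paper instead invokes its Theorem~\ref{th:sparseSolve}) and that you verify the collapse of the KKT right-hand side explicitly, which the paper leaves implicit.
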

\begin{proof}
We show that, for $\blambda = \lambdaNatural$, we have 
\begin{align}
\frac{d f^2}{d\bp^2} &= 
    \frac{d \bx^T}{d \bp} \frac{\partial^2 f}{ \partial \bx^2} \frac{d \bx}{d \bp} 
    + \frac{\partial^2 f}{ \partial \bx \partial \bp} \frac{d \bx}{d \bp}
    +  \frac{d \bx^T}{d \bp} \frac{\partial^2 f}{\partial \bp \partial \bx}
    +\frac{\partial^2 f}{\partial \bp^2}
    + \sum_i \frac{\partial f}{\partial \bx_i} \frac{d^2 \bx_i}{d \bp^2} 
        \nonumber \\
    &=
    \frac{d \bx^T}{d \bp} \nabla^2_{\bx\bx} \Lagr \frac{d \bx}{d \bp}
    + \nabla^2_{\bx\bp} \Lagr \frac{d \bx}{d \bp}
    + \frac{d \bx^T}{d \bp} \nabla^2_{\bp\bx} \Lagr
    +  \nabla^2_{\bp\bp} \Lagr
    \label{eq:KKTReducedEqualToTotalHessian}
\end{align} 
Only the term involving second-order sensitivities is non-obvious. Using basic transformations, we obtain
{\tiny
\begin{align}
    \sum_k &\frac{\partial f}{\partial \bx_k} \frac{d^2 \bx_k}{d \bp_j d \bp_i} =\\
     &= - \sum_k \frac{\partial f}{\partial \bx_k} \left[\left(\frac{\partial \bc}{\partial \bx} \right)^{-1} \left(
    \frac{d \bx_m}{d \bp_j} \frac{\partial^2 \bc}{\partial \bx_m \partial \bx_l} \frac{d \bx_l}{d \bp_i}
    + \frac{\partial^2 \bc}{\partial \bx_m \partial \bp_i} \frac{d \bx_m}{d \bp_j}
    + \frac{d \bx_l}{d \bp_i} \frac{\partial^2 \bc}{\partial \bp_j \partial \bx_l} 
    +
     \frac{\partial^2 \bc}{\partial \bp_j \partial \bp_i}
     \right) \right]_k
    \nonumber\\
    &= -\sum_k \left( \left(\frac{\partial \bc}{\partial \bx} \right)^{-T} \frac{\partial f}{\partial \bx}^T \right)_k \left(
    \frac{d \bx_m}{d \bp_j} \frac{\partial^2 \bc_k}{\partial \bx_m \partial \bx_l} \frac{d \bx_l}{d \bp_i}
    + \frac{\partial^2 \bc_k}{\partial \bx_m \partial \bp_i} \frac{d \bx_m}{d \bp_j}
    + \frac{d \bx_l}{d \bp_i} \frac{\partial^2 \bc_k}{\partial \bp_j \partial \bx_l} 
    + \frac{\partial^2 \bc_k}{\partial \bp_j \partial \bp_i} \right)
    \nonumber\\
    &= \sum_k \lambdaNatural_k \left(
    \frac{d \bx_m}{d \bp_j} \frac{\partial^2 \bc_k}{d \bx_m \partial \bx_l} \frac{d \bx_l}{d \bp_i}
    + \frac{\partial^2 \bc_k}{\partial \bx_m \partial \bp_i} \frac{d \bx_m}{d \bp_j}
    + \frac{d \bx_l}{d \bp_i} \frac{\partial^2 \bc_k}{\partial \bp_j \partial \bx_l} 
    + \frac{\partial^2 \bc_k}{\partial \bp_j \partial \bp_i} \right) \ .
\end{align}
}
Eq. \eqref{eq:KKTReducedEqualToTotalHessian} thus holds and, using Theorem \ref{th:sparseSolve}, the result follows directly.
\end{proof}

\section{Block Solve}
\label{app:blockSolve}
Using $B=0$ and $C=0$ in (\ref{eq:sparseSystemGN}), we have
\begin{equation*}
	\delta \blambda = -\frac{\partial \bc}{\partial \bp}^{-T}\frac{df}{d\bp}^T \ , \quad \text{and} \quad
	\delta \bx = -A^{-1}\frac{\partial \bc}{\partial \bx}^{T}\delta \blambda \ .
\end{equation*}
Using these definitions in the third row of (\ref{eq:sparseSystemGN}), we obtain
\begin{align*}
	\delta \bp & = -\frac{\partial \bc}{\partial \bp}^{-1}\frac{\partial \bc}{\partial \bx}\delta \bx 
				= \frac{\partial \bc}{\partial \bp}^{-1}\frac{\partial \bc}{\partial \bx} A^{-1}\frac{\partial \bc}{\partial \bx}^{T}\delta \blambda  \\
				& = -\frac{\partial \bc}{\partial \bp}^{-1}\frac{\partial \bc}{\partial \bx} A^{-1}\frac{\partial \bc}{\partial \bx}^{T}\frac{\partial \bc}{\partial \bp}^{-T}\frac{df}{d\bp}^T \ , 
\end{align*}
and using (\ref{eq:totalGradient}) with $\frac{\partial f}{\partial \bp}=\mathbf0$, we finally have
\begin{align*}
	\delta \bp  & =   \frac{\partial \bc}{\partial \bp}^{-1}\frac{\partial \bc}{\partial \bx} A^{-1}\frac{\partial \bc}{\partial \bx}^{T}\frac{\partial \bc}{\partial \bp}^{-T}\left(\frac{\partial \bc}{\partial \bp}^T\frac{\partial \bc}{\partial \bx}^{-T} \right )\frac{\partial f}{\partial \bx}^T \\ 
	& = \frac{\partial \bc}{\partial \bp}^{-1}\frac{\partial \bc}{\partial \bx}A^{-1}\frac{\partial f}{\partial \bx}^T \ .
\end{align*}

\bibliographystyle{ACM-Reference-Format}
\bibliography{references}


\begin{thebibliography}{43}


\ifx \showCODEN    \undefined \def \showCODEN     #1{\unskip}     \fi
\ifx \showDOI      \undefined \def \showDOI       #1{#1}\fi
\ifx \showISBNx    \undefined \def \showISBNx     #1{\unskip}     \fi
\ifx \showISBNxiii \undefined \def \showISBNxiii  #1{\unskip}     \fi
\ifx \showISSN     \undefined \def \showISSN      #1{\unskip}     \fi
\ifx \showLCCN     \undefined \def \showLCCN      #1{\unskip}     \fi
\ifx \shownote     \undefined \def \shownote      #1{#1}          \fi
\ifx \showarticletitle \undefined \def \showarticletitle #1{#1}   \fi
\ifx \showURL      \undefined \def \showURL       {\relax}        \fi
\providecommand\bibfield[2]{#2}
\providecommand\bibinfo[2]{#2}
\providecommand\natexlab[1]{#1}
\providecommand\showeprint[2][]{arXiv:#2}

\bibitem[\protect\citeauthoryear{Benzi, Golub, and Liesen}{Benzi
  et~al\mbox{.}}{2005}]%
        {benzi2005numerical}
\bibfield{author}{\bibinfo{person}{Michele Benzi}, \bibinfo{person}{Gene~H
  Golub}, {and} \bibinfo{person}{J{\"o}rg Liesen}.}
  \bibinfo{year}{2005}\natexlab{}.
\newblock \showarticletitle{Numerical solution of saddle point problems}.
\newblock \bibinfo{journal}{\emph{Acta numerica}}  \bibinfo{volume}{14}
  (\bibinfo{year}{2005}), \bibinfo{pages}{1}.
\newblock


\bibitem[\protect\citeauthoryear{Bergou, Audoly, Vouga, Wardetzky, and
  Grinspun}{Bergou et~al\mbox{.}}{2010}]%
        {Bergou:2010:DVT:1778765.1778853}
\bibfield{author}{\bibinfo{person}{Mikl\'{o}s Bergou}, \bibinfo{person}{Basile
  Audoly}, \bibinfo{person}{Etienne Vouga}, \bibinfo{person}{Max Wardetzky},
  {and} \bibinfo{person}{Eitan Grinspun}.} \bibinfo{year}{2010}\natexlab{}.
\newblock \showarticletitle{Discrete Viscous Threads}.
\newblock \bibinfo{journal}{\emph{ACM Trans. Graph.}} \bibinfo{volume}{29},
  \bibinfo{number}{4}, Article \bibinfo{articleno}{116} (\bibinfo{date}{July}
  \bibinfo{year}{2010}).
\newblock
\showISSN{0730-0301}
\urldef\tempurl%
\url{https://doi.org/10.1145/1778765.1778853}
\showDOI{\tempurl}


\bibitem[\protect\citeauthoryear{Bharaj, Levin, Tompkin, Fei, Pfister, Matusik,
  and Zheng}{Bharaj et~al\mbox{.}}{2015}]%
        {Bharaj:2015:CDM:2816795.2818108}
\bibfield{author}{\bibinfo{person}{Gaurav Bharaj}, \bibinfo{person}{David I.~W.
  Levin}, \bibinfo{person}{James Tompkin}, \bibinfo{person}{Yun Fei},
  \bibinfo{person}{Hanspeter Pfister}, \bibinfo{person}{Wojciech Matusik},
  {and} \bibinfo{person}{Changxi Zheng}.} \bibinfo{year}{2015}\natexlab{}.
\newblock \showarticletitle{Computational Design of Metallophone Contact
  Sounds}.
\newblock \bibinfo{journal}{\emph{ACM Trans. Graph.}} \bibinfo{volume}{34},
  \bibinfo{number}{6}, Article \bibinfo{articleno}{223} (\bibinfo{date}{Oct.}
  \bibinfo{year}{2015}).
\newblock
\showISSN{0730-0301}
\urldef\tempurl%
\url{https://doi.org/10.1145/2816795.2818108}
\showDOI{\tempurl}


\bibitem[\protect\citeauthoryear{Bletzinger, Firl, Linhard, and
  W{\"u}chner}{Bletzinger et~al\mbox{.}}{2010}]%
        {bletzinger2010optimal}
\bibfield{author}{\bibinfo{person}{Kai-Uwe Bletzinger},
  \bibinfo{person}{Matthias Firl}, \bibinfo{person}{Johannes Linhard}, {and}
  \bibinfo{person}{Roland W{\"u}chner}.} \bibinfo{year}{2010}\natexlab{}.
\newblock \showarticletitle{Optimal shapes of mechanically motivated surfaces}.
\newblock \bibinfo{journal}{\emph{Computer methods in applied mechanics and
  engineering}} \bibinfo{volume}{199}, \bibinfo{number}{5-8}
  (\bibinfo{year}{2010}), \bibinfo{pages}{324--333}.
\newblock


\bibitem[\protect\citeauthoryear{Chen, Zheng, Xu, and Zhou}{Chen
  et~al\mbox{.}}{2014}]%
        {Chen:2014:ANM}
\bibfield{author}{\bibinfo{person}{Xiang Chen}, \bibinfo{person}{Changxi
  Zheng}, \bibinfo{person}{Weiwei Xu}, {and} \bibinfo{person}{Kun Zhou}.}
  \bibinfo{year}{2014}\natexlab{}.
\newblock \showarticletitle{An Asymptotic Numerical Method for Inverse Elastic
  Shape Design}.
\newblock \bibinfo{journal}{\emph{ACM Trans. Graph.}} \bibinfo{volume}{33},
  \bibinfo{number}{4}, Article \bibinfo{articleno}{95} (\bibinfo{date}{July}
  \bibinfo{year}{2014}).
\newblock
\showISSN{0730-0301}
\urldef\tempurl%
\url{https://doi.org/10.1145/2601097.2601189}
\showDOI{\tempurl}


\bibitem[\protect\citeauthoryear{Coros, Thomaszewski, Noris, Sueda, Forberg,
  Sumner, Matusik, and Bickel}{Coros et~al\mbox{.}}{2013}]%
        {coros2013computational}
\bibfield{author}{\bibinfo{person}{Stelian Coros}, \bibinfo{person}{Bernhard
  Thomaszewski}, \bibinfo{person}{Gioacchino Noris}, \bibinfo{person}{Shinjiro
  Sueda}, \bibinfo{person}{Moira Forberg}, \bibinfo{person}{Robert~W Sumner},
  \bibinfo{person}{Wojciech Matusik}, {and} \bibinfo{person}{Bernd Bickel}.}
  \bibinfo{year}{2013}\natexlab{}.
\newblock \showarticletitle{Computational design of mechanical characters}.
\newblock \bibinfo{journal}{\emph{ACM Transactions on Graphics (TOG)}}
  \bibinfo{volume}{32}, \bibinfo{number}{4} (\bibinfo{year}{2013}),
  \bibinfo{pages}{83}.
\newblock


\bibitem[\protect\citeauthoryear{De~los Reyes}{De~los Reyes}{2015}]%
        {Reyes15Numerical}
\bibfield{author}{\bibinfo{person}{Juan~Carlos De~los Reyes}.}
  \bibinfo{year}{2015}\natexlab{}.
\newblock \bibinfo{booktitle}{\emph{Numerical PDE-Constrained Optimization}}.
\newblock \bibinfo{publisher}{Springer}.
\newblock


\bibitem[\protect\citeauthoryear{Gauge, Coros, Mani, and Thomaszewski}{Gauge
  et~al\mbox{.}}{2014}]%
        {gauge2014interactive}
\bibfield{author}{\bibinfo{person}{Damien Gauge}, \bibinfo{person}{Stelian
  Coros}, \bibinfo{person}{Sandro Mani}, {and} \bibinfo{person}{Bernhard
  Thomaszewski}.} \bibinfo{year}{2014}\natexlab{}.
\newblock \showarticletitle{Interactive Design of Modular Tensegrity
  Characters}. In \bibinfo{booktitle}{\emph{Proceedings of the ACM
  SIGGRAPH/Eurographics Symposium on Computer Animation}}. Eurographics
  Association, \bibinfo{pages}{131--138}.
\newblock


\bibitem[\protect\citeauthoryear{Geilinger, Hahn, Zehnder, B{\"a}cher,
  Thomaszewski, and Coros}{Geilinger et~al\mbox{.}}{2020}]%
        {geilinger2020add}
\bibfield{author}{\bibinfo{person}{Moritz Geilinger}, \bibinfo{person}{David
  Hahn}, \bibinfo{person}{Jonas Zehnder}, \bibinfo{person}{Moritz B{\"a}cher},
  \bibinfo{person}{Bernhard Thomaszewski}, {and} \bibinfo{person}{Stelian
  Coros}.} \bibinfo{year}{2020}\natexlab{}.
\newblock \showarticletitle{ADD: Analytically Differentiable Dynamics for
  Multi-Body Systems with Frictional Contact}.
\newblock \bibinfo{journal}{\emph{ACM Transactions on Graphics (TOG)}}
  \bibinfo{volume}{39}, \bibinfo{number}{6} (\bibinfo{year}{2020}).
\newblock


\bibitem[\protect\citeauthoryear{Gingold, Secord, Han, Grinspun, and
  Zorin}{Gingold et~al\mbox{.}}{2004}]%
        {gingold2004discrete}
\bibfield{author}{\bibinfo{person}{Yotam Gingold}, \bibinfo{person}{Adrian
  Secord}, \bibinfo{person}{Jefferson~Y Han}, \bibinfo{person}{Eitan Grinspun},
  {and} \bibinfo{person}{Denis Zorin}.} \bibinfo{year}{2004}\natexlab{}.
\newblock \showarticletitle{A discrete model for inelastic deformation of thin
  shells}. In \bibinfo{booktitle}{\emph{ACM SIGGRAPH/Eurographics Symposium on
  Computer Animation}}.
\newblock


\bibitem[\protect\citeauthoryear{Grinspun, Hirani, Desbrun, and
  Schr\"{o}der}{Grinspun et~al\mbox{.}}{2003}]%
        {Grinspun:2003:DS:846276.846284}
\bibfield{author}{\bibinfo{person}{Eitan Grinspun}, \bibinfo{person}{Anil~N.
  Hirani}, \bibinfo{person}{Mathieu Desbrun}, {and} \bibinfo{person}{Peter
  Schr\"{o}der}.} \bibinfo{year}{2003}\natexlab{}.
\newblock \showarticletitle{Discrete Shells}. In
  \bibinfo{booktitle}{\emph{Proceedings of the 2003 ACM SIGGRAPH/Eurographics
  Symposium on Computer Animation}} (San Diego, California)
  \emph{(\bibinfo{series}{SCA '03})}. \bibinfo{publisher}{Eurographics
  Association}, \bibinfo{address}{Aire-la-Ville, Switzerland, Switzerland},
  \bibinfo{pages}{62--67}.
\newblock
\showISBNx{1-58113-659-5}
\urldef\tempurl%
\url{http://dl.acm.org/citation.cfm?id=846276.846284}
\showURL{%
\tempurl}


\bibitem[\protect\citeauthoryear{Guseinov, Miguel, and Bickel}{Guseinov
  et~al\mbox{.}}{2017}]%
        {Guseinov17CurveUps}
\bibfield{author}{\bibinfo{person}{Ruslan Guseinov}, \bibinfo{person}{Eder
  Miguel}, {and} \bibinfo{person}{Bernd Bickel}.}
  \bibinfo{year}{2017}\natexlab{}.
\newblock \showarticletitle{CurveUps: Shaping Objects from Flat Plates with
  Tension-Actuated Curvature}.
\newblock \bibinfo{journal}{\emph{ACM Trans. Graph.}} \bibinfo{volume}{36},
  \bibinfo{number}{4}, Article \bibinfo{articleno}{64} (\bibinfo{date}{July}
  \bibinfo{year}{2017}).
\newblock
\showISSN{0730-0301}
\urldef\tempurl%
\url{https://doi.org/10.1145/3072959.3073709}
\showDOI{\tempurl}


\bibitem[\protect\citeauthoryear{Han and Fujiwara}{Han and Fujiwara}{1985}]%
        {han1985inertia}
\bibfield{author}{\bibinfo{person}{S-P Han} {and} \bibinfo{person}{O
  Fujiwara}.} \bibinfo{year}{1985}\natexlab{}.
\newblock \showarticletitle{An inertia theorem for symmetric matrices and its
  application to nonlinear programming}.
\newblock \bibinfo{journal}{\emph{Linear algebra and its applications}}
  \bibinfo{volume}{72} (\bibinfo{year}{1985}), \bibinfo{pages}{47--58}.
\newblock


\bibitem[\protect\citeauthoryear{Heinkenschloss}{Heinkenschloss}{1996}]%
        {Heinkenschloss96ProjectedSQP}
\bibfield{author}{\bibinfo{person}{Matthias Heinkenschloss}.}
  \bibinfo{year}{1996}\natexlab{}.
\newblock \showarticletitle{Projected Sequential Quadratic Programming
  Methods}.
\newblock \bibinfo{journal}{\emph{SIAM Journal on Optimization}}
  \bibinfo{volume}{6} (\bibinfo{date}{05} \bibinfo{year}{1996}).
\newblock
\urldef\tempurl%
\url{https://doi.org/10.1137/0806022}
\showDOI{\tempurl}


\bibitem[\protect\citeauthoryear{Jiang, Tang, Seidel, and Wonka}{Jiang
  et~al\mbox{.}}{2017}]%
        {Jiang17Design}
\bibfield{author}{\bibinfo{person}{Caigui Jiang}, \bibinfo{person}{Chengcheng
  Tang}, \bibinfo{person}{Hans-Peter Seidel}, {and} \bibinfo{person}{Peter
  Wonka}.} \bibinfo{year}{2017}\natexlab{}.
\newblock \showarticletitle{Design and Volume Optimization of Space
  Structures}.
\newblock \bibinfo{journal}{\emph{ACM Trans. Graph.}} \bibinfo{volume}{36},
  \bibinfo{number}{4}, Article \bibinfo{articleno}{159} (\bibinfo{date}{July}
  \bibinfo{year}{2017}).
\newblock
\showISSN{0730-0301}
\urldef\tempurl%
\url{https://doi.org/10.1145/3072959.3073619}
\showDOI{\tempurl}


\bibitem[\protect\citeauthoryear{Kilian, Monszpart, and Mitra}{Kilian
  et~al\mbox{.}}{2017}]%
        {StringActuated:KilianMonszpartMitra:TOG:2017}
\bibfield{author}{\bibinfo{person}{Martin Kilian}, \bibinfo{person}{Aron
  Monszpart}, {and} \bibinfo{person}{Niloy~J. Mitra}.}
  \bibinfo{year}{2017}\natexlab{}.
\newblock \showarticletitle{String Actuated Curved Folded Surfaces}.
\newblock \bibinfo{journal}{\emph{{ACM} Transactions on Graphics}}
  \bibinfo{volume}{36}, \bibinfo{number}{3}, Article \bibinfo{articleno}{25}
  (\bibinfo{date}{May} \bibinfo{year}{2017}).
\newblock
\showISSN{0730-0301}
\urldef\tempurl%
\url{https://doi.org/10.1145/3015460}
\showDOI{\tempurl}


\bibitem[\protect\citeauthoryear{Kovalsky, Galun, and Lipman}{Kovalsky
  et~al\mbox{.}}{2016}]%
        {Kovalsky:2016:AQP:2897824.2925920}
\bibfield{author}{\bibinfo{person}{Shahar~Z. Kovalsky}, \bibinfo{person}{Meirav
  Galun}, {and} \bibinfo{person}{Yaron Lipman}.}
  \bibinfo{year}{2016}\natexlab{}.
\newblock \showarticletitle{Accelerated Quadratic Proxy for Geometric
  Optimization}.
\newblock \bibinfo{journal}{\emph{ACM Trans. Graph.}} \bibinfo{volume}{35},
  \bibinfo{number}{4}, Article \bibinfo{articleno}{134} (\bibinfo{date}{July}
  \bibinfo{year}{2016}).
\newblock
\showISSN{0730-0301}
\urldef\tempurl%
\url{https://doi.org/10.1145/2897824.2925920}
\showDOI{\tempurl}


\bibitem[\protect\citeauthoryear{Lenders, Kirches, and Potschka}{Lenders
  et~al\mbox{.}}{2018}]%
        {lenders2018trlib}
\bibfield{author}{\bibinfo{person}{Felix Lenders}, \bibinfo{person}{Christian
  Kirches}, {and} \bibinfo{person}{Andreas Potschka}.}
  \bibinfo{year}{2018}\natexlab{}.
\newblock \showarticletitle{trlib: A vector-free implementation of the GLTR
  method for iterative solution of the trust region problem}.
\newblock \bibinfo{journal}{\emph{Optimization Methods and Software}}
  \bibinfo{volume}{33}, \bibinfo{number}{3} (\bibinfo{year}{2018}),
  \bibinfo{pages}{420--449}.
\newblock


\bibitem[\protect\citeauthoryear{Liu, Bouaziz, and Kavan}{Liu
  et~al\mbox{.}}{2017}]%
        {Liu:2017:QMR:3072959.2990496}
\bibfield{author}{\bibinfo{person}{Tiantian Liu}, \bibinfo{person}{Sofien
  Bouaziz}, {and} \bibinfo{person}{Ladislav Kavan}.}
  \bibinfo{year}{2017}\natexlab{}.
\newblock \showarticletitle{Quasi-Newton Methods for Real-Time Simulation of
  Hyperelastic Materials}.
\newblock \bibinfo{journal}{\emph{ACM Trans. Graph.}} \bibinfo{volume}{36},
  \bibinfo{number}{4}, Article \bibinfo{articleno}{116a} (\bibinfo{date}{May}
  \bibinfo{year}{2017}).
\newblock
\showISSN{0730-0301}
\urldef\tempurl%
\url{https://doi.org/10.1145/3072959.2990496}
\showDOI{\tempurl}


\bibitem[\protect\citeauthoryear{Ly, Casati, Bertails-Descoubes, Skouras, and
  Boissieux}{Ly et~al\mbox{.}}{2018}]%
        {Ly2018inverse}
\bibfield{author}{\bibinfo{person}{Micka{\"e}l Ly}, \bibinfo{person}{Romain
  Casati}, \bibinfo{person}{Florence Bertails-Descoubes},
  \bibinfo{person}{M{\'e}lina Skouras}, {and} \bibinfo{person}{Laurence
  Boissieux}.} \bibinfo{year}{2018}\natexlab{}.
\newblock \showarticletitle{Inverse Elastic Shell Design with Contact and
  Friction}.
\newblock \bibinfo{journal}{\emph{ACM Transactions on Graphics}}
  \bibinfo{volume}{37}, \bibinfo{number}{6} (\bibinfo{year}{2018}).
\newblock


\bibitem[\protect\citeauthoryear{Megaro, Zehnder, B\"{a}cher, Coros, Gross, and
  Thomaszewski}{Megaro et~al\mbox{.}}{2017}]%
        {Megaro:2017:CDT:3072959.3073636}
\bibfield{author}{\bibinfo{person}{Vittorio Megaro}, \bibinfo{person}{Jonas
  Zehnder}, \bibinfo{person}{Moritz B\"{a}cher}, \bibinfo{person}{Stelian
  Coros}, \bibinfo{person}{Markus Gross}, {and} \bibinfo{person}{Bernhard
  Thomaszewski}.} \bibinfo{year}{2017}\natexlab{}.
\newblock \showarticletitle{A Computational Design Tool for Compliant
  Mechanisms}.
\newblock \bibinfo{journal}{\emph{ACM Trans. Graph.}} \bibinfo{volume}{36},
  \bibinfo{number}{4}, Article \bibinfo{articleno}{82} (\bibinfo{date}{July}
  \bibinfo{year}{2017}).
\newblock
\showISSN{0730-0301}
\urldef\tempurl%
\url{https://doi.org/10.1145/3072959.3073636}
\showDOI{\tempurl}


\bibitem[\protect\citeauthoryear{Mukherjee, Wu, and Wang}{Mukherjee
  et~al\mbox{.}}{2018}]%
        {Mukherjee:2018:ITS:3242771.3203196}
\bibfield{author}{\bibinfo{person}{Rajaditya Mukherjee},
  \bibinfo{person}{Longhua Wu}, {and} \bibinfo{person}{Huamin Wang}.}
  \bibinfo{year}{2018}\natexlab{}.
\newblock \showarticletitle{Interactive Two-Way Shape Design of Elastic
  Bodies}.
\newblock \bibinfo{journal}{\emph{Proc. ACM Comput. Graph. Interact. Tech.}}
  \bibinfo{volume}{1}, \bibinfo{number}{1}, Article \bibinfo{articleno}{11}
  (\bibinfo{date}{July} \bibinfo{year}{2018}).
\newblock
\showISSN{2577-6193}
\urldef\tempurl%
\url{https://doi.org/10.1145/3203196}
\showDOI{\tempurl}


\bibitem[\protect\citeauthoryear{Musialski, Hafner, Rist, Birsak, Wimmer, and
  Kobbelt}{Musialski et~al\mbox{.}}{2016}]%
        {Musialski:2016:NSO}
\bibfield{author}{\bibinfo{person}{Przemyslaw Musialski},
  \bibinfo{person}{Christian Hafner}, \bibinfo{person}{Florian Rist},
  \bibinfo{person}{Michael Birsak}, \bibinfo{person}{Michael Wimmer}, {and}
  \bibinfo{person}{Leif Kobbelt}.} \bibinfo{year}{2016}\natexlab{}.
\newblock \showarticletitle{Non-linear Shape Optimization Using Local Subspace
  Projections}.
\newblock \bibinfo{journal}{\emph{ACM Trans. Graph.}} \bibinfo{volume}{35},
  \bibinfo{number}{4}, Article \bibinfo{articleno}{87} (\bibinfo{date}{July}
  \bibinfo{year}{2016}).
\newblock
\showISSN{0730-0301}
\urldef\tempurl%
\url{https://doi.org/10.1145/2897824.2925886}
\showDOI{\tempurl}


\bibitem[\protect\citeauthoryear{Panetta, Konakovi\'{c}-Lukovi\'{c}, Isvoranu,
  Bouleau, and Pauly}{Panetta et~al\mbox{.}}{2019}]%
        {Panetta:2019:XNC:3306346.3323040}
\bibfield{author}{\bibinfo{person}{J. Panetta}, \bibinfo{person}{M.
  Konakovi\'{c}-Lukovi\'{c}}, \bibinfo{person}{F. Isvoranu},
  \bibinfo{person}{E. Bouleau}, {and} \bibinfo{person}{M. Pauly}.}
  \bibinfo{year}{2019}\natexlab{}.
\newblock \showarticletitle{X-Shells: A New Class of Deployable Beam
  Structures}.
\newblock \bibinfo{journal}{\emph{ACM Trans. Graph.}} \bibinfo{volume}{38},
  \bibinfo{number}{4}, Article \bibinfo{articleno}{83} (\bibinfo{date}{July}
  \bibinfo{year}{2019}).
\newblock
\showISSN{0730-0301}
\urldef\tempurl%
\url{https://doi.org/10.1145/3306346.3323040}
\showDOI{\tempurl}


\bibitem[\protect\citeauthoryear{Peng and Vempala}{Peng and Vempala}{2020}]%
        {peng2020solving}
\bibfield{author}{\bibinfo{person}{Richard Peng} {and} \bibinfo{person}{Santosh
  Vempala}.} \bibinfo{year}{2020}\natexlab{}.
\newblock \bibinfo{title}{Solving Sparse Linear Systems Faster than Matrix
  Multiplication}.
\newblock
\newblock
\showeprint[arxiv]{2007.10254}


\bibitem[\protect\citeauthoryear{Peng, Deng, Zhang, Geng, Qin, and Liu}{Peng
  et~al\mbox{.}}{2018}]%
        {Peng:2018:AAG:3197517.3201290}
\bibfield{author}{\bibinfo{person}{Yue Peng}, \bibinfo{person}{Bailin Deng},
  \bibinfo{person}{Juyong Zhang}, \bibinfo{person}{Fanyu Geng},
  \bibinfo{person}{Wenjie Qin}, {and} \bibinfo{person}{Ligang Liu}.}
  \bibinfo{year}{2018}\natexlab{}.
\newblock \showarticletitle{Anderson Acceleration for Geometry Optimization and
  Physics Simulation}.
\newblock \bibinfo{journal}{\emph{ACM Trans. Graph.}} \bibinfo{volume}{37},
  \bibinfo{number}{4}, Article \bibinfo{articleno}{42} (\bibinfo{date}{July}
  \bibinfo{year}{2018}).
\newblock
\showISSN{0730-0301}
\urldef\tempurl%
\url{https://doi.org/10.1145/3197517.3201290}
\showDOI{\tempurl}


\bibitem[\protect\citeauthoryear{P{\'e}rez, Otaduy, and Thomaszewski}{P{\'e}rez
  et~al\mbox{.}}{2017}]%
        {Perez:2017:CDA}
\bibfield{author}{\bibinfo{person}{Jes\'{u}s P{\'e}rez},
  \bibinfo{person}{Miguel~A. Otaduy}, {and} \bibinfo{person}{Bernhard
  Thomaszewski}.} \bibinfo{year}{2017}\natexlab{}.
\newblock \showarticletitle{Computational Design and Automated Fabrication of
  Kirchhoff-plateau Surfaces}.
\newblock \bibinfo{journal}{\emph{ACM Trans. Graph.}} \bibinfo{volume}{36},
  \bibinfo{number}{4}, Article \bibinfo{articleno}{62} (\bibinfo{date}{July}
  \bibinfo{year}{2017}).
\newblock
\showISSN{0730-0301}
\urldef\tempurl%
\url{https://doi.org/10.1145/3072959.3073695}
\showDOI{\tempurl}


\bibitem[\protect\citeauthoryear{Pietroni, Tarini, Vaxman, Panozzo, and
  Cignoni}{Pietroni et~al\mbox{.}}{2017}]%
        {Pietroni17PositionBased}
\bibfield{author}{\bibinfo{person}{Nico Pietroni}, \bibinfo{person}{Marco
  Tarini}, \bibinfo{person}{Amir Vaxman}, \bibinfo{person}{Daniele Panozzo},
  {and} \bibinfo{person}{Paolo Cignoni}.} \bibinfo{year}{2017}\natexlab{}.
\newblock \showarticletitle{Position-Based Tensegrity Design}.
\newblock \bibinfo{journal}{\emph{ACM Trans. Graph.}} \bibinfo{volume}{36},
  \bibinfo{number}{6}, Article \bibinfo{articleno}{172} (\bibinfo{date}{Nov.}
  \bibinfo{year}{2017}).
\newblock
\showISSN{0730-0301}
\urldef\tempurl%
\url{https://doi.org/10.1145/3130800.3130809}
\showDOI{\tempurl}


\bibitem[\protect\citeauthoryear{Schulz, Wang, Grinspun, Solomon, and
  Matusik}{Schulz et~al\mbox{.}}{2018}]%
        {Schulz:2018:IED}
\bibfield{author}{\bibinfo{person}{Adriana Schulz}, \bibinfo{person}{Harrison
  Wang}, \bibinfo{person}{Eitan Grinspun}, \bibinfo{person}{Justin Solomon},
  {and} \bibinfo{person}{Wojciech Matusik}.} \bibinfo{year}{2018}\natexlab{}.
\newblock \showarticletitle{Interactive Exploration of Design Trade-offs}.
\newblock \bibinfo{journal}{\emph{ACM Trans. Graph.}} \bibinfo{volume}{37},
  \bibinfo{number}{4}, Article \bibinfo{articleno}{131} (\bibinfo{date}{July}
  \bibinfo{year}{2018}).
\newblock
\showISSN{0730-0301}
\urldef\tempurl%
\url{https://doi.org/10.1145/3197517.3201385}
\showDOI{\tempurl}


\bibitem[\protect\citeauthoryear{Skouras, Thomaszewski, Bickel, and
  Gross}{Skouras et~al\mbox{.}}{2012}]%
        {skouras2012computational}
\bibfield{author}{\bibinfo{person}{M{\'e}lina Skouras},
  \bibinfo{person}{Bernhard Thomaszewski}, \bibinfo{person}{Bernd Bickel},
  {and} \bibinfo{person}{Markus Gross}.} \bibinfo{year}{2012}\natexlab{}.
\newblock \showarticletitle{Computational design of rubber balloons}. In
  \bibinfo{booktitle}{\emph{Computer Graphics Forum}},
  Vol.~\bibinfo{volume}{31}. Wiley Online Library, \bibinfo{pages}{835--844}.
\newblock


\bibitem[\protect\citeauthoryear{Skouras, Thomaszewski, Coros, Bickel, and
  Gross}{Skouras et~al\mbox{.}}{2013}]%
        {skouras2013computational}
\bibfield{author}{\bibinfo{person}{M{\'e}lina Skouras},
  \bibinfo{person}{Bernhard Thomaszewski}, \bibinfo{person}{Stelian Coros},
  \bibinfo{person}{Bernd Bickel}, {and} \bibinfo{person}{Markus Gross}.}
  \bibinfo{year}{2013}\natexlab{}.
\newblock \showarticletitle{Computational design of actuated deformable
  characters}.
\newblock \bibinfo{journal}{\emph{ACM Transactions on Graphics (TOG)}}
  \bibinfo{volume}{32}, \bibinfo{number}{4} (\bibinfo{year}{2013}),
  \bibinfo{pages}{82}.
\newblock


\bibitem[\protect\citeauthoryear{Skouras, Thomaszewski, Kaufmann, Garg, Bickel,
  Grinspun, and Gross}{Skouras et~al\mbox{.}}{2014}]%
        {Skouras14DIS}
\bibfield{author}{\bibinfo{person}{M{\'e}lina Skouras},
  \bibinfo{person}{Bernhard Thomaszewski}, \bibinfo{person}{Peter Kaufmann},
  \bibinfo{person}{Akash Garg}, \bibinfo{person}{Bernd Bickel},
  \bibinfo{person}{Eitan Grinspun}, {and} \bibinfo{person}{Markus Gross}.}
  \bibinfo{year}{2014}\natexlab{}.
\newblock \showarticletitle{Designing Inflatable Structures}.
\newblock  \bibinfo{volume}{33}, \bibinfo{number}{4} (\bibinfo{year}{2014}).
\newblock


\bibitem[\protect\citeauthoryear{Tomas and Martí-Montrull}{Tomas and
  Martí-Montrull}{2010}]%
        {Tomas10Optimality}
\bibfield{author}{\bibinfo{person}{Antonio Tomas} {and}
  \bibinfo{person}{Pascual Martí-Montrull}.} \bibinfo{year}{2010}\natexlab{}.
\newblock \showarticletitle{Optimality of Candela's concrete shells: A study of
  his posthumous design}.
\newblock \bibinfo{journal}{\emph{Journal of the International Association for
  Shell and Spatial Structures}}  \bibinfo{volume}{51} (\bibinfo{date}{03}
  \bibinfo{year}{2010}), \bibinfo{pages}{67--77}.
\newblock


\bibitem[\protect\citeauthoryear{Umetani, Kaufman, Igarashi, and
  Grinspun}{Umetani et~al\mbox{.}}{2011}]%
        {umetani2011sensitive}
\bibfield{author}{\bibinfo{person}{Nobuyuki Umetani}, \bibinfo{person}{Danny~M
  Kaufman}, \bibinfo{person}{Takeo Igarashi}, {and} \bibinfo{person}{Eitan
  Grinspun}.} \bibinfo{year}{2011}\natexlab{}.
\newblock \showarticletitle{Sensitive couture for interactive garment modeling
  and editing.}
\newblock \bibinfo{journal}{\emph{ACM Trans. Graph.}} \bibinfo{volume}{30},
  \bibinfo{number}{4} (\bibinfo{year}{2011}), \bibinfo{pages}{90}.
\newblock


\bibitem[\protect\citeauthoryear{Umetani, Panotopoulou, Schmidt, and
  Whiting}{Umetani et~al\mbox{.}}{2016}]%
        {Umetani16Printone}
\bibfield{author}{\bibinfo{person}{Nobuyuki Umetani}, \bibinfo{person}{Athina
  Panotopoulou}, \bibinfo{person}{Ryan Schmidt}, {and} \bibinfo{person}{Emily
  Whiting}.} \bibinfo{year}{2016}\natexlab{}.
\newblock \showarticletitle{Printone: Interactive Resonance Simulation for
  Free-Form Print-Wind Instrument Design}.
\newblock \bibinfo{journal}{\emph{ACM Trans. Graph.}} \bibinfo{volume}{35},
  \bibinfo{number}{6}, Article \bibinfo{articleno}{184} (\bibinfo{date}{Nov.}
  \bibinfo{year}{2016}).
\newblock
\showISSN{0730-0301}
\urldef\tempurl%
\url{https://doi.org/10.1145/2980179.2980250}
\showDOI{\tempurl}


\bibitem[\protect\citeauthoryear{Van~der Vorst}{Van~der Vorst}{1992}]%
        {van1992bi}
\bibfield{author}{\bibinfo{person}{Henk~A Van~der Vorst}.}
  \bibinfo{year}{1992}\natexlab{}.
\newblock \showarticletitle{Bi-CGSTAB: A fast and smoothly converging variant
  of Bi-CG for the solution of nonsymmetric linear systems}.
\newblock \bibinfo{journal}{\emph{SIAM Journal on scientific and Statistical
  Computing}} \bibinfo{volume}{13}, \bibinfo{number}{2} (\bibinfo{year}{1992}),
  \bibinfo{pages}{631--644}.
\newblock


\bibitem[\protect\citeauthoryear{Vouga, H\"{o}binger, Wallner, and
  Pottmann}{Vouga et~al\mbox{.}}{2012}]%
        {Vouga12SelfSupporting}
\bibfield{author}{\bibinfo{person}{Etienne Vouga}, \bibinfo{person}{Mathias
  H\"{o}binger}, \bibinfo{person}{Johannes Wallner}, {and}
  \bibinfo{person}{Helmut Pottmann}.} \bibinfo{year}{2012}\natexlab{}.
\newblock \showarticletitle{Design of Self-Supporting Surfaces}.
\newblock \bibinfo{journal}{\emph{ACM Trans. Graph.}} \bibinfo{volume}{31},
  \bibinfo{number}{4}, Article \bibinfo{articleno}{87} (\bibinfo{date}{July}
  \bibinfo{year}{2012}).
\newblock
\showISSN{0730-0301}
\urldef\tempurl%
\url{https://doi.org/10.1145/2185520.2185583}
\showDOI{\tempurl}


\bibitem[\protect\citeauthoryear{Wang}{Wang}{2018}]%
        {Wang:2018:RSP:3197517.3201320}
\bibfield{author}{\bibinfo{person}{Huamin Wang}.}
  \bibinfo{year}{2018}\natexlab{}.
\newblock \showarticletitle{Rule-free Sewing Pattern Adjustment with Precision
  and Efficiency}.
\newblock \bibinfo{journal}{\emph{ACM Trans. Graph.}} \bibinfo{volume}{37},
  \bibinfo{number}{4}, Article \bibinfo{articleno}{53} (\bibinfo{date}{July}
  \bibinfo{year}{2018}).
\newblock
\showISSN{0730-0301}
\urldef\tempurl%
\url{https://doi.org/10.1145/3197517.3201320}
\showDOI{\tempurl}


\bibitem[\protect\citeauthoryear{Yan, Li, Yang, and Wang}{Yan
  et~al\mbox{.}}{2018}]%
        {Yan:2018:IDM}
\bibfield{author}{\bibinfo{person}{Guowei Yan}, \bibinfo{person}{Wei Li},
  \bibinfo{person}{Ruigang Yang}, {and} \bibinfo{person}{Huamin Wang}.}
  \bibinfo{year}{2018}\natexlab{}.
\newblock \showarticletitle{Inexact Descent Methods for Elastic Parameter
  Optimization}.
\newblock \bibinfo{journal}{\emph{ACM Trans. Graph.}} \bibinfo{volume}{37},
  \bibinfo{number}{6}, Article \bibinfo{articleno}{253} (\bibinfo{date}{Dec.}
  \bibinfo{year}{2018}).
\newblock
\showISSN{0730-0301}
\urldef\tempurl%
\url{https://doi.org/10.1145/3272127.3275021}
\showDOI{\tempurl}


\bibitem[\protect\citeauthoryear{Zehnder, Coros, and Thomaszewski}{Zehnder
  et~al\mbox{.}}{2016}]%
        {Zehnder16DSO}
\bibfield{author}{\bibinfo{person}{Jonas Zehnder}, \bibinfo{person}{Stelian
  Coros}, {and} \bibinfo{person}{Bernhard Thomaszewski}.}
  \bibinfo{year}{2016}\natexlab{}.
\newblock \showarticletitle{Designing Structurally-sound Ornamental Curve
  Networks}.
\newblock \bibinfo{journal}{\emph{ACM Trans. Graph.}} \bibinfo{volume}{35},
  \bibinfo{number}{4}, Article \bibinfo{articleno}{99} (\bibinfo{year}{2016}).
\newblock


\bibitem[\protect\citeauthoryear{Zehnder, Knoop, B\"{a}cher, and
  Thomaszewski}{Zehnder et~al\mbox{.}}{2017}]%
        {Zehnder:2017:MDF:3130800.3130881}
\bibfield{author}{\bibinfo{person}{Jonas Zehnder}, \bibinfo{person}{Espen
  Knoop}, \bibinfo{person}{Moritz B\"{a}cher}, {and} \bibinfo{person}{Bernhard
  Thomaszewski}.} \bibinfo{year}{2017}\natexlab{}.
\newblock \showarticletitle{Metasilicone: Design and Fabrication of Composite
  Silicone with Desired Mechanical Properties}.
\newblock \bibinfo{journal}{\emph{ACM Trans. Graph.}} \bibinfo{volume}{36},
  \bibinfo{number}{6}, Article \bibinfo{articleno}{240} (\bibinfo{date}{Nov.}
  \bibinfo{year}{2017}).
\newblock
\showISSN{0730-0301}
\urldef\tempurl%
\url{https://doi.org/10.1145/3130800.3130881}
\showDOI{\tempurl}


\bibitem[\protect\citeauthoryear{Zhu, Bridson, and Kaufman}{Zhu
  et~al\mbox{.}}{2018}]%
        {Zhu:2018:BCQ:3197517.3201359}
\bibfield{author}{\bibinfo{person}{Yufeng Zhu}, \bibinfo{person}{Robert
  Bridson}, {and} \bibinfo{person}{Danny~M. Kaufman}.}
  \bibinfo{year}{2018}\natexlab{}.
\newblock \showarticletitle{Blended Cured Quasi-newton for Distortion
  Optimization}.
\newblock \bibinfo{journal}{\emph{ACM Trans. Graph.}} \bibinfo{volume}{37},
  \bibinfo{number}{4}, Article \bibinfo{articleno}{40} (\bibinfo{date}{July}
  \bibinfo{year}{2018}).
\newblock
\showISSN{0730-0301}
\urldef\tempurl%
\url{https://doi.org/10.1145/3197517.3201359}
\showDOI{\tempurl}


\bibitem[\protect\citeauthoryear{Zimmermann, Poranne, Bern, and
  Coros}{Zimmermann et~al\mbox{.}}{2019}]%
        {Zimmermann19Puppetmaster}
\bibfield{author}{\bibinfo{person}{Simon Zimmermann}, \bibinfo{person}{Roi
  Poranne}, \bibinfo{person}{James~M Bern}, {and} \bibinfo{person}{Stelian
  Coros}.} \bibinfo{year}{2019}\natexlab{}.
\newblock \showarticletitle{PuppetMaster: robotic animation of marionettes}.
\newblock \bibinfo{journal}{\emph{ACM Transactions on Graphics (TOG)}}
  \bibinfo{volume}{38}, \bibinfo{number}{4} (\bibinfo{year}{2019}),
  \bibinfo{pages}{103}.
\newblock


\end{thebibliography}


\begin{thebibliography}{1}
\providecommand{\natexlab}[1]{#1}
\providecommand{\url}[1]{\texttt{#1}}
\expandafter\ifx\csname urlstyle\endcsname\relax
  \providecommand{\doi}[1]{doi: #1}\else
  \providecommand{\doi}{doi: \begingroup \urlstyle{rm}\Url}\fi

\bibitem[von Petersdorff(2020)]{ErrorsInLinearSystems}
Tobias von Petersdorff.
\newblock \emph{Errors for Linear Systems}, 2020.
\newblock \url{http://terpconnect.umd.edu/~petersd/460/linsysterrn.pdf}.

\end{thebibliography}

\clearpage
\newpage
\includepdf[pages=-]{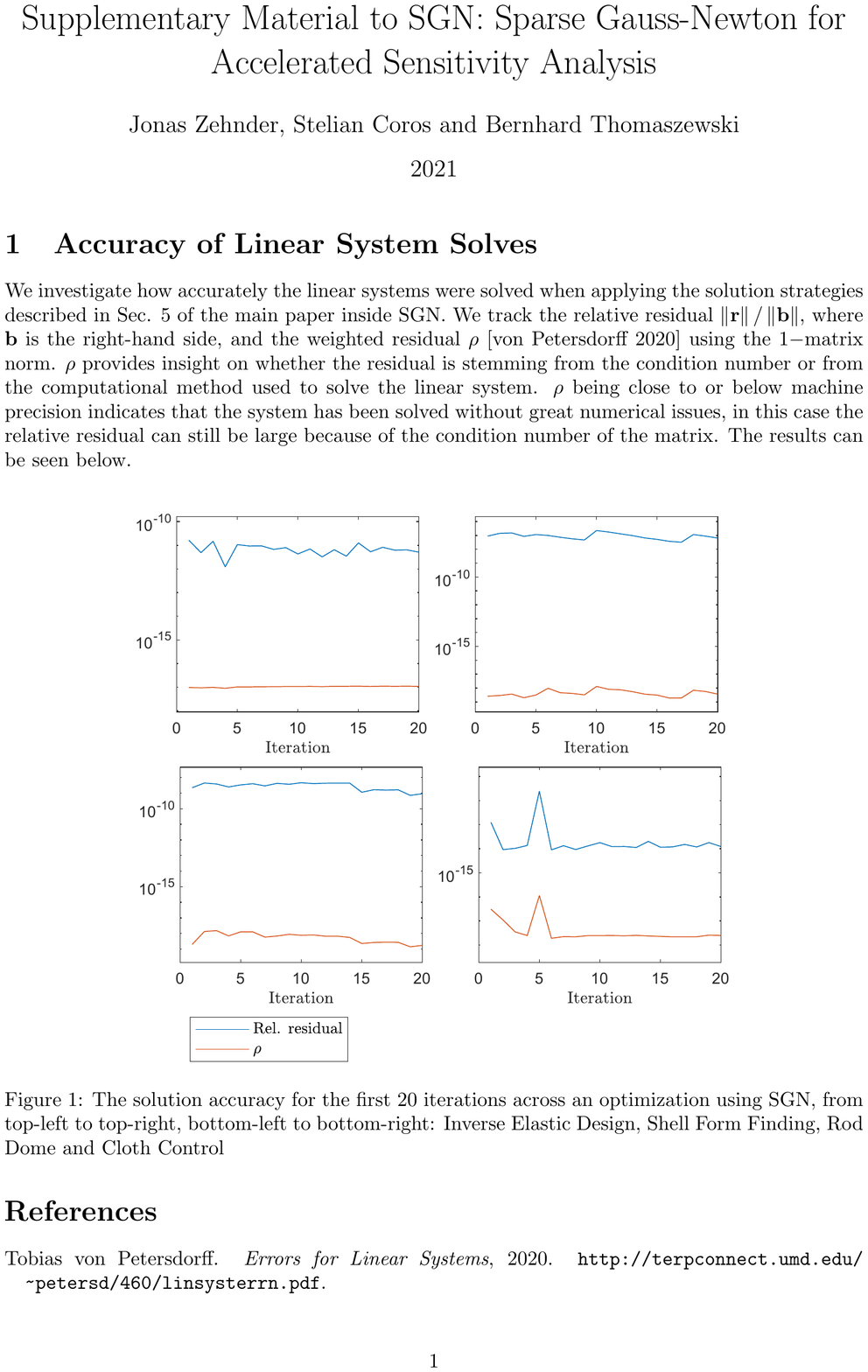}

\end{document}